\newtheorem{theorem}{Theorem}[section]
\newtheorem{lemma}[theorem]{Lemma}
\newtheorem{proposition}[theorem]{Proposition}
\newtheorem{definition}[theorem]{Definition}
\newtheorem{corollary}[theorem]{Corollary}
\newtheorem{conjecture}[theorem]{Conjecture}
\theoremstyle{remark}
\newtheorem*{remark}{\bf  \itshape  \textup{Remark}}
\title{Visible lattice points in  P\'{o}lya's walk}
\author{Meijie Lu}
\address{School of Mathematics, Shandong University, Jinan 250100, Shandong, China}
\email{meijie.lu@hotmail.com}
\author{Xianchang Meng}
\address{School of Mathematics, Shandong University, Jinan 250100, Shandong, China \& Johannes Kepler University Linz, Linz 4040, Austria}
\email{xianchang.meng@jku.at}
\subjclass[2010]{11A05, 11H06, 60G50, 60F15}
\keywords{P\'{o}lya's walk, visible lattice points, Dirichlet distribution, greatest common divisors.}
\begin{document}
	
	%	\footnotetext{$^\ast$ Corresponding author. }
	
	\maketitle
	\begin{abstract}	
		In this paper, for any integer $k\geq 2$, we  study the distribution of the visible lattice points in  certain generalized P\'{o}lya's walk on $\mathbb{Z}^k$: perturbed P\'{o}lya's walk and twisted P\'{o}lya's walk.
		For the first case, we prove that the density of visible lattice points in a perturbed P\'{o}lya's walk  is almost surely $1/\zeta(k)$, where $\zeta(s)$ denotes the Riemann zeta function. A trivial case of our result covers the standard P\'{o}lya's walk. Moreover, we do numerical experiments for the second case, we conjecture that the density is also almost surely $1/\zeta(k)$. %In our proof, we combine  number theoretic techniques. 
	\end{abstract}
	
	\section{Introduction}
	
	\subsection{Visible lattice points on ${\mathbb{Z}^{k}}$}
	Let $k\geq 2$ be an integer, in the $k$-dimensional integer lattice ${\mathbb{Z}^{k}}$, a lattice point ${\bf n}=(n_1,\cdots,n_k)$ is said to be visible  from ${\bf m}=(m_1,\cdots,m_k)$ if the straight line segment joining ${\bf n}$ and ${\bf m}$ contains no other lattice point. If ${\bf n}$ is visible from the origin, we simply call it a visible point. This concept has been studied and used in various areas such as  integer optimization and 
	theoretical physics \cite{BGW,BCZ,M}.
	%Given this definition, it is natural to study the density of visible lattice points. 
	
	Visible lattice points are also widely concerned in number theory, for example, their density (or their numbers) in certain regions. 
	Dirichlet \cite{D-1} and Sylvester \cite{S} proved that the density of visible lattice points on $\mathbb{Z}^2$ is $1/\zeta(2)=6/\pi^2\approx0.607928$, where $\zeta(s)$ stands for the Riemann zeta function. Lehmer \cite{L} and Christopher \cite{C} considered the visible lattice points in higher dimensional lattices. They showed that the proportion of such points on $\mathbb{Z}^k$ is $1/\zeta(k)$. Moreover, in  \cite{B,HN,Z}, the authors gave asymptotic formulas for the number  of visible points within a suitable smooth convex domains in $\mathbb{R}^2$.
	
	All the above results focus on the lattice points which are visible to only one point (i.e. the origin). Rearick in his Ph.D. thesis \cite{R-thesis} first considered the lattice points which are visible from multiple $M (M=2\ {\rm or}\ 3)$ points, he proved that the density  of such points on $\mathbb{Z}^2$ is $\prod_{p}(1-M/p^2)$, where $p$ runs over all primes. Then in \cite{R}, he generalized this result to the higher dimensional lattice $\mathbb{Z}^k$, as well as a general $M$, and gave an asymptotic formula for the number of such visible points. Later,  Liu, Lu and Meng \cite{LLM} improved the error term of this asymptotic formula.  Further works on simultaneous visibility can be found in \cite{BKP,CTZ,R(1)}.

	Other interesting questions related to visible points have been investigated before. For example, the minimum number problem, that is, the least number of lattice points that can be selected from $[1,n]^k\cap \mathbb{Z}^k$ such that each point of this lattice cube is
	visible from at least one of them. The reader can refer to  \cite{Ab,AB,AC} for the bound of this minimum number. Also, the visible points can be considered from its diffraction pattern \cite{BMP}, its ergodic properties \cite{BH}, or its percolative properties \cite{M(1)}.
	
	The above concept of visibility  can be generalized, namely, to consider the visibility along certain type of curves. See some recent works in this direction in \cite{BEH, GHKM, HO, LM-2, LLM}.

	\subsection{P\'{o}lya's walk  on ${\mathbb{Z}^{k}}$}
	Random walks have received extensive attention in various disciplines: in mathematics, in biology and even in chemistry \cite{B(1),KPRT}.
	%For example, Liu and Meng \cite{LM-1} and our recent work \cite{LLM(2)} studied the visibility of lattice points visited by an ${\boldsymbol{\alpha}}$-random walk on $\mathbb{Z}^2$ and $\mathbb{Z}^k$, respectively. 
	In number theory, Lifshits and Weber \cite{LW} and Srichan \cite{Sr} considered the Lindel\"{o}f hypothesis in Cauchy random walk; 
	Jouve \cite{J} connected the large sieve method with random walks on cosets of arithmetic group; McNew \cite{Mc} studied
	random walks on the residues modulo $n$. %\cite{CFF,LM-1,LLM(2)} connected the visibility of lattice points with $\alpha$-random walk. 
	In this paper, we focus on another type of random walks which is the so-called  P\'{o}lya's walk.
	
	Let $\mathbb{Z}_{>0}$ be the set of positive integers, as Hales mentioned in \cite{H}, we describe the standard P\'{o}lya's walk $({\bf p}_{i})_{i\geq 0}$ on $\mathbb{Z}^2$  as follows: the walker starts from the initial point ${\bf p}_0=(p_{0,1},p_{0,2})\in \mathbb{Z}^2_{>0}$, and the jumps of the walker ${\bf p}_{i+1}-{\bf p}_i$ can only take two possible values: (1,0) and (0,1) (that is, the walker moves one unit right or up in $\mathbb{Z}^2$ at each step), where ${\bf p}_i=(p_{i,1},p_{i,2})$ is the coordinate of the $i$th step of the walker, and the corresponding probabilities are 
	$$
	\mathbb{P}\big({\bf p}_{i+1}-{\bf p}_i=(1,0)\big)=\frac{p_{i,1}}{p_{i,1}+p_{i,2}},\ \ \ \ \ \mathbb{P}\big({\bf p}_{i+1}-{\bf p}_i=(0,1)\big)=\frac{p_{i,2}}{p_{i,1}+p_{i,2}}.
	$$ 
	It is natural to generalize this concept to higher dimensional lattice. Here we are interested in a general P\'{o}lya's walk on $\mathbb{Z}^{k}$. Let $s({\bf  v}):=v_1+\cdots+v_k$ for vector ${\bf v}=(v_1,\cdots,v_k)\in\mathbb{Z}^k$, we define the perturbed P\'{o}lya's walk as following:
	
	\begin{definition}[Perturbed P\'{o}lya's walk]\label{Defn-Polya-walk}
		Suppose ${\bf p}_0=(p_{0,1},\cdots,p_{0,k})\in\mathbb{Z}^{k}$ with all $p_{0,r}\geq B$, here $B>0$ is a fixed constant, for fixed vector $\boldsymbol{\beta}=(\beta_1,\cdots,\beta_k)$ satisfying all $|\beta_{r}|<B$ and $\beta_1+\cdots+\beta_k=0$, a perturbed P\'{o}lya walk $({\bf p}_{i})_{i\geq 0}$ started from ${\bf p}_0$ is given by
		$$
		{\bf p}_{i+1}={\bf p}_{i}+{\bf Y}_{i+1}
		$$
		for $i=0,\ 1,\cdots$, where ${\bf p}_i=(p_{i,1},\cdots,p_{i,k})$ is the coordinate of the $i$th step and the jumps
		\begin{align}\label{def:Y=}
			{\bf Y}_{i+1}:=\left\{
			\begin{aligned}
				&(1,0,\cdots,0),\ \ {{with\ probability}}\ \frac{p_{i,1}+\beta_1}{s({\bf p}_{i})},\\
				&(0,1,\cdots,0),\ \ {{with\ probability}}\ \frac{p_{i,2}+\beta_2}{s({\bf p}_{i})},\\
				&\quad\quad\quad\quad\cdots\\
				&(0,0,\cdots,1),\ \ {{with\ probability}}\ \frac{p_{i,k}+\beta_k}{s({\bf p}_{i})}.\\
			\end{aligned}
			\right.
		\end{align}
	\end{definition}
	We emphasize that the above jumps are not independent. Besides, if the constant $B=1$, and there is no perturbation, i.e. all $\beta_r=0$ in \eqref{def:Y=}, then
	the corresponding walk is the  standard P\'{o}lya's walk with initial position ${\bf p}_0\in\mathbb{Z}^k_{>0}$. In fact, the  standard P\'{o}lya's walk can be interpreted in a different way, i.e. the P\'{o}lya's urn model:
	
	Suppose there is an urn that consists of $k$ different colored balls, and for convenience, we number these colors as $1,2,\cdots,k$. At the begining, suppose there are $p_{0,r}$ balls of color $r,\ r=1,\ 2,\cdots,k$. Drawing a ball uniformly at random from the urn at each time, and observe the color, then return the drawn ball to the urn and add a ball of the same color. Repeat this process indefinitely. Let $p_{n,r}$ be the number of balls of color $r$ after $n$ successive random drawings, then the list $(p_{n,1},\cdots,p_{n,k})$ denotes the composition of P\'{o}lya's urn after $n$ drawings, and it is also the coordinate of the $n$th step ${\bf p}_n$ of the standard P\'{o}lya's walk. For more information about this model, see \cite{Ma}.
	
	\subsection{Visible lattice points in perturbed P\'{o}lya's walk}
	Associated to the perturbed P\'{o}lya's walk on $\mathbb{Z}^k$, we consider a sequence of  Bernoulli random variables $(V_i)_{i\geq1}$ with each
	\begin{equation}\label{def:V=}
		V_i:=\left\{
		\begin{aligned}
			&1,\ \ {\rm{if}}\ {\bf p}_i\ \rm{is\ visible},\\
			&0,\ \ \rm{otherwise.}
		\end{aligned}
		\right.
	\end{equation}
	For integer $N\geq 1$, we denote
	\begin{equation}\label{def:R=}
		\overline{R}_k({N}):=\overline{R}_k(N;\boldsymbol{\beta},{\bf p}_0)=\frac{1}{N}(V_1+\cdots+V_N),
	\end{equation}
	then the random variable $\overline{R}_k({N})$ represents the proportion of visible steps in the perturbed P\'{o}lya's walk in first $N$ steps. 
	
	Under the above notations, the first result of this paper is
	\begin{theorem}\label{thm:R(S)=}
		Suppose ${\bf p}_0 $ and $\boldsymbol{\beta}$ are given as in Definition \ref{Defn-Polya-walk}, then for any integer $k\geq 2$, we have
		$$
		\lim_{N\rightarrow\infty}\overline{R}_k({N})=\frac{1}{\zeta(k)}\ \ \ almost\ surely,
		$$
		where $\zeta(s)$ is the Riemann zeta function.
	\end{theorem}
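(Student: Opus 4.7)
The plan is to combine Möbius inversion on visibility with the classical de Finetti-type representation of the perturbed Pólya urn, thereby reducing the statement to a Markov-chain strong law of large numbers applied to the walk's residues modulo $d$. Since ${\bf p}_i$ is visible iff $\gcd({\bf p}_i)=1$, Möbius inversion gives $V_i=\sum_{d\ge 1}\mu(d)\mathbf{1}[d\mid {\bf p}_i]$, and hence
$$
\overline{R}_k(N)=\sum_{d\ge 1}\mu(d)A_d(N),\qquad A_d(N):=\frac{1}{N}\sum_{i=1}^{N}\mathbf{1}[d\mid {\bf p}_i],
$$
a finite sum since $A_d(N)=0$ once $d>s({\bf p}_0)+N$. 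The target then splits into two sub-problems: (i) for each fixed $d$, show $A_d(N)\to 1/d^k$ almost surely; and (ii) control the tail $\sum_{d>D}\mu(d)A_d(N)$ uniformly in $N$, so as to interchange limit and sum and obtain $\sum_{d\ge 1}\mu(d)/d^k=1/\zeta(k)$.

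For (i), I would use that the perturbed Pólya urn with parameter $\boldsymbol{\alpha}:=(p_{0,r}+\beta_r)_r$ has all entries strictly positive (guaranteed by $|\beta_r|<B\le p_{0,r}$), so the bounded martingale ${\bf p}_n/s({\bf p}_n)$ converges almost surely to ${\bf X}\sim \mathrm{Dir}(\boldsymbol{\alpha})$ and, conditional on ${\bf X}$, the increments $({\bf Y}_i)$ are i.i.d.\ with $\mathbb{P}({\bf Y}_i=e_r\mid {\bf X})=X_r$; moreover $X_r>0$ for every $r$ almost surely. Conditional on ${\bf X}$ the chain $({\bf p}_i\bmod d)$ is then a random walk on $(\mathbb{Z}/d\mathbb{Z})^k$ of period exactly $d$. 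Sub-sampling at times $i=md+c$ for each fixed offset $c$ yields an aperiodic irreducible chain on the coset $\{v:s(v)\equiv s({\bf p}_0)+c\pmod d\}$ of size $d^{k-1}$, with uniform stationary measure (a short character computation, using that $\{e_r-e_s\}_{r,s}$ generates the sum-zero subgroup). The Markov-chain SLLN, combined over $c$, gives
$$
A_d(N)\longrightarrow \frac{1}{d}\cdot\frac{1}{d^{k-1}}=\frac{1}{d^k}\quad\text{a.s.,}
$$
the factor $1/d$ reflecting that only the single offset $c\equiv -s({\bf p}_0)\pmod d$ contributes to $\mathbf{1}[d\mid {\bf p}_i]$.

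For (ii), a Fourier expansion on $(\mathbb{Z}/d\mathbb{Z})^k$ conditional on ${\bf X}$ yields
$$
\mathbb{P}\bigl(d\mid {\bf p}_i\,\big|\,{\bf X}\bigr)=\frac{\mathbf{1}[d\mid s({\bf p}_0)+i]}{d^{k-1}}+O\bigl(\rho({\bf X},d)^{i}\bigr),
$$
where $\rho({\bf X},d):=\max_{\vec{\alpha}\ne (a,\ldots,a)}\bigl|\sum_r X_r e^{2\pi i\alpha_r/d}\bigr|<1$ whenever every $X_r>0$. Averaging over $i\le N$ and integrating over ${\bf X}$ delivers $\mathbb{E}[A_d(N)]\le C/d^k$ uniformly in $N$, so that $\mathbb{E}\bigl[\bigl|\overline R_k(N)-\sum_{d\le D}\mu(d)A_d(N)\bigr|\bigr]=O\bigl(D^{-(k-1)}\log D\bigr)$ for $k\ge 2$. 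A Markov-inequality/Borel–Cantelli argument along a polynomially growing subsequence $N_j=\lfloor j^a\rfloor$ (with $a>1$), together with the truncation $D_j=N_j^{\delta}$ for small $\delta>0$, yields the almost-sure limit $\overline{R}_k(N_j)\to 1/\zeta(k)$, and the elementary Lipschitz bound $|\overline{R}_k(N+1)-\overline{R}_k(N)|\le 1/(N+1)$ then extends the conclusion to every $N$.

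The main obstacle I expect is upgrading the tail estimate from an expectation bound to an almost-sure statement, and in particular controlling the joint dependence of the mixing rate $\rho({\bf X},d)$ on both ${\bf X}$ and $d$: this rate degrades near the boundary of the simplex where some $X_r$ is very small. The natural workaround is to restrict to the event $\{\min_r X_r\ge \epsilon\}$, on which $\rho({\bf X},d)$ is bounded away from $1$ uniformly in $d$, and to handle the complementary small-probability event via the Dirichlet density. The assumption $|\beta_r|<B$ is precisely what prevents the Dirichlet law from placing mass on the boundary of the simplex and allows these estimates to be made uniform as $\epsilon\downarrow 0$.
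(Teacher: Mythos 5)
Your architecture is genuinely different from the paper's in how it converts estimates into almost-sure convergence: you condition on the de Finetti limit ${\bf X}$ and invoke the Markov-chain SLLN for the residue walk modulo each fixed $d$, then truncate and apply Borel--Cantelli along a subsequence, whereas the paper never conditions explicitly and instead runs a second-moment method (its Lemma 3.1), computing $\mathbb{E}(\overline R_k(N))$ and $\mathbb{V}(\overline R_k(N))$ via the Dirichlet mixture representation (its Lemma 3.2, which is exactly your de Finetti step) together with M\"obius inversion. Your part (i) is correct and arguably more transparent than the paper's computation. However, part (ii) contains two concrete gaps. First, the claim that $\rho({\bf X},d)$ is bounded away from $1$ uniformly in $d$ on $\{\min_r X_r\ge\epsilon\}$ is false: for the character $h=(1,0,\dots,0)$ one has $|X_1e^{2\pi i/d}+(1-X_1)|=1-O(\epsilon/d^2)$, so the spectral gap decays like $\epsilon/d^2$ and the mixing time grows like $d^2/\epsilon$, which is comparable to $N$ for $d\asymp\sqrt N$ --- precisely the range you must control to reach the tail bound $\mathbb{E}[A_d(N)]\le C/d^k$ uniformly in $N$. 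Making this work requires a quantitative character-sum estimate that is uniform in $d$ (this is the content of the paper's Lemma 4.1, proved via Lemmas 5.1--5.4, yielding an error $O(\log n/\sqrt n)$ per admissible time $n$ independent of $d$); your sketch black-boxes exactly this, which is the technical heart of the result.

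Second, your final paragraph conflates ``the Dirichlet law puts no mass on the boundary'' with ``the relevant negative moments of ${\bf X}$ are finite.'' Integrating the mixing error over ${\bf X}$ produces factors like $\mathbb{E}[(X_tX_k)^{-1/2}]$, which are finite only when the Dirichlet parameters $p_{0,r}+\beta_r$ exceed $1/2$ (and the variance computation needs $(X_tX_k)^{-1}$, hence parameters $>1$); the hypotheses of the theorem allow, e.g., $p_{0,r}=1$, $\beta_r=-0.9$, where these moments diverge. The paper resolves this by a separate reduction: it shows $\mathbb{P}({\bf p}_n \text{ has some coordinate equal to } 1)\to 0$, so the walk almost surely enters the region where all coordinates are at least $2$ (all parameters $>1$), after which the finite initial segment does not affect the density. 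Your proposal omits this reduction. Your framework could in principle avoid the integrability issue entirely by keeping everything conditional on ${\bf X}={\bf x}$ (where the constants $C({\bf x})$ are finite for a.e.\ ${\bf x}$, which suffices for a conditional Borel--Cantelli and hence for the unconditional a.s.\ statement), but as written you integrate over ${\bf X}$ and assert a uniformity that does not hold.
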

	
	Theorem \ref{thm:R(S)=} tells us  that the density of visible lattice points on a random path of the above perturbed P\'{o}lya's walk on $\mathbb{Z}^k$ is almost surely $1/\zeta(k)$. In other words,  this density in such a random walk is almost surely the asymptotic density of visible points in the whole space.
	
	If we assume $B=1$ and $\beta_r=0$ for all $1\leq r\leq k$ in the definition of perturbed P\'{o}lya's walk, then Theorem \ref{thm:R(S)=} holds for the standard P\'{o}lya's walk, we give the following corollary.
	\begin{corollary}\label{cor:standard density}
		The density of visible lattice points visited by a $k$-dimensional standard P\'{o}lya's walk with starting point ${\bf p}_0\in\mathbb{Z}^k_{>0}$ is almost surely
		${1}/{\zeta(k)}$.
	\end{corollary}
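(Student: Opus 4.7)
The strategy is to combine M\"obius inversion with the almost-sure Dirichlet limit of the perturbed P\'olya urn, and then to exchange the resulting $d$-sum with the time average. A lattice point ${\bf n}\in\mathbb{Z}^k_{>0}$ is visible iff $\gcd(n_1,\dots,n_k)=1$, so $V_n=\sum_{d\mid\gcd({\bf p}_n)}\mu(d)$. Swapping the (finite) sums gives
$$
\overline{R}_k(N)=\sum_{d\ge 1}\mu(d)\,D_d(N),\qquad D_d(N):=\frac{1}{N}\sum_{n=1}^N\mathbf{1}\bigl[d\mid p_{n,r}\text{ for all }r\bigr],
$$
and only the range $d\le N+\max_r p_{0,r}$ contributes for any given $N$.

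For each fixed $d\ge 1$ I would next prove the pointwise limit $D_d(N)\to 1/d^k$ a.s. A direct computation from \eqref{def:Y=} shows that $\{(p_{n,r}+\beta_r)/s({\bf p}_n)\}_n$ is a bounded martingale for each $r$, hence
$$
\frac{{\bf p}_n}{n}\longrightarrow\boldsymbol{\pi}\quad\text{a.s.,}
$$
with $\boldsymbol{\pi}$ supported in the open simplex under the hypotheses of Definition~\ref{Defn-Polya-walk}; consequently the time-$n$ increment distribution converges a.s.\ in total variation to the categorical law with weights $\boldsymbol{\pi}$. Conditional on $\boldsymbol{\pi}$, the process $({\bf p}_n\bmod d)_n$ is an asymptotically time-homogeneous Markov chain on $(\mathbb{Z}/d\mathbb{Z})^k$ whose limiting kernel increments each coordinate with probability $\pi_r>0$; it is therefore irreducible and aperiodic with uniform stationary measure $1/d^k$. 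A coupling with the corresponding iid walk and the ergodic theorem for finite-state Markov chains then yield $D_d(N)\to 1/d^k$ a.s.

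To pass the $N\to\infty$ limit through the $d$-sum, I would establish a uniform bound $\mathbb{P}\bigl(d\mid p_{n,r}\text{ for all }r\bigr)\ll 1/d^k$ for $n$ sufficiently large (e.g.\ by a local-limit / Fourier estimate for the nearly-multinomial count vector ${\bf p}_n-{\bf p}_0$ conditional on $\boldsymbol{\pi}$), and then apply a truncation argument splitting $d\le D$ from $d>D$ with $D=D(N)\to\infty$ chosen so that the initial segment (handled by the previous step) and the tail (controlled by the uniform bound together with the summability of $\sum_d 1/d^k$) both contribute $o(1)$. This yields
$$
\lim_{N\to\infty}\overline{R}_k(N)=\sum_{d\ge 1}\frac{\mu(d)}{d^k}=\frac{1}{\zeta(k)}\quad\text{a.s.}
$$

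The principal obstacle is the almost-sure equidistribution modulo $d$ above: the walk has dependent and non-identically distributed increments, and the perturbation $\boldsymbol{\beta}$ destroys the jump exchangeability enjoyed by the unperturbed P\'olya urn (where, conditional on its Dirichlet limit, the jumps form an iid sequence by de Finetti). I expect the argument to proceed by quantifying the a.s.\ total-variation convergence of the step kernel to $\boldsymbol{\pi}$ and invoking a mixing result for slowly inhomogeneous finite-state Markov chains on $(\mathbb{Z}/d\mathbb{Z})^k$; a second-moment computation of $\mathbb{E}[V_nV_m]$ followed by a Borel--Cantelli argument is a plausible self-contained alternative should the Markov-chain route prove delicate.
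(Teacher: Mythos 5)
The paper obtains this corollary simply by specializing Theorem~\ref{thm:R(S)=} to $B=1$, $\boldsymbol{\beta}=\mathbf{0}$, so your proposal is really an independent re-proof, to be compared with the paper's proof of Theorem~\ref{thm:R(S)=} (the second-moment method: Propositions~\ref{pro:E(R)} and~\ref{pro:V(R)=} fed into Lemma~\ref{lem:lim R_n=u}). Your fixed-$d$ step is essentially sound for the standard walk: by exchangeability and de Finetti the jumps are, conditionally on the Dirichlet limit $\boldsymbol{\pi}$, iid categorical with weights in the open simplex, and the induced walk on $(\mathbb{Z}/d\mathbb{Z})^k$ is an irreducible random walk on a finite group, so its Ces\`aro occupation frequencies converge a.s.\ to the uniform measure and $D_d(N)\to 1/d^k$ a.s. One correction: that chain is \emph{not} aperiodic --- $s({\bf p}_n)\bmod d$ advances deterministically by $1$ at every step, so the chain has period $d$ and the event $\{d\mid p_{n,r}\ \forall r\}$ can occur only when $n\equiv -s({\bf p}_0)\pmod d$. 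The Ces\`aro limit $1/d^k$ survives (frequency $1/d$ of admissible times, uniform weight $1/d^{k-1}$ within each cyclic class), but you cannot invoke aperiodic mixing. Also, your stated ``principal obstacle'' is illusory even for the perturbed walk: the paper's Lemma~\ref{lem:the joint mass function} shows the perturbed jumps are still exchangeable, since their joint law depends only on the counts $t_n^{(r)}$.

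The genuine gap is the interchange of $\lim_{N\to\infty}$ with $\sum_d$. The deterministic bound is only $D_d(N)\le 1/d+1/N$, and $\sum_d|\mu(d)|/d$ diverges, so fixed-$d$ convergence alone yields nothing. Your proposed remedy, a bound $\mathbb{P}(d\mid p_{n,r}\ \forall r)\ll d^{-k}$, controls an expectation, whereas the truncation argument requires an \emph{almost sure} bound on the random quantities $D_d(N)$ holding simultaneously for all $d>D(N)$ and all large $N$; upgrading the probability estimate to such a uniform a.s.\ statement (via Borel--Cantelli over the pairs $(d,N)$, with explicit rates from a local-limit estimate) is precisely the hard quantitative core of the problem and is not supplied. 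The paper sidesteps this entirely by performing the M\"obius inversion inside expectations, where the $d$-sum is finite and legitimate (Lemma~\ref{lem:sum over u} inside the Dirichlet integral), and extracting almost-sure convergence only at the end from the decay of the variance. Your own fallback --- computing $\mathbb{E}[V_nV_m]$ and applying a Borel--Cantelli/second-moment argument --- is exactly that route, and is what you would need to carry out to close the gap.
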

	
	Observe that  Corollary \ref{cor:standard density} covers Theorem 1 of \cite{FF} for $k=2$. %But in order to solve the higher dimensional case, here we take a different approach than \cite{FF}, even for $k=2$.
	
	Next we verify the above results by doing experiments using random generator in Python. In this process, we consider the perturbed P\'{o}lya's walk in two or three-dimensional lattice, and calculate the density of visible steps within 100000 steps. Since our results are ``almost surely", we do the same calculation 10 times then take the average. We list the numerical densities on $\mathbb{Z}^2$ and $\mathbb{Z}^3$ for some values of ${\bf p}_0$, $\boldsymbol{\beta}$ and $B$ in Tables \ref{tab:the generalized polya walk in two} and \ref{tab:the generalized polya walk in three}, respectively. 
	
	We should point out that the densities for $\boldsymbol{\beta}=(0,0)$ and $\boldsymbol{\beta}=(0,0,0)$ in Tables \ref{tab:the generalized polya walk in two} and \ref{tab:the generalized polya walk in three} are the numerical densities in standard P\'{o}lya's walk started at ${\bf p}_0$. 
	
	Recall from Theorem \ref{thm:R(S)=} that the theoretical density on $\mathbb{Z}^2$ (or $\mathbb{Z}^3$) is almost surely ${1}/{\zeta(2)}\approx0.607928$ (or ${1}/{\zeta(3)}\approx0.831907$). We see that,  the numerical densities in the following two tables fit very well with the conclusion in our Theorem \ref{thm:R(S)=}.
	%Here $\boldsymbol{\beta}^{\prime}=(0,0)$ or $\boldsymbol{\beta}^{\prime}=(0,0,0)$ is also the corresponding densities in standard P\'{o}lya's walk. 

	\begin{table}[h]
		
		\begin{tabular}{|c|c|c|c|}
			\hline
			&  ${\bf p}_0$ & ${\boldsymbol{\beta}}$ & Numerical density\\
			\hline
			\multirow{4}*{$B=1$} & \multirow{2}*{$(1,1)$} & $(0,0)$ & 0.607958\\
			\cline{3-4}
			&  & $(0.6,-0.6)$ & 0.605907\\
			\cline{2-4}
			& \multirow{2}*{$(2,4)$} & $(-0.3,0.3)$ & 0.607895\\
			\cline{3-4}
			& & $(-0.8,0.8)$ & 0.607566\\
			\hline
			\multirow{4}*{$B=6.5$} & \multirow{2}*{$(7,9)$} & $(2.5,-2.5)$ & 0.608319\\
			\cline{3-4}
			&  & $(6,-6)$ & 0.608122\\
			\cline{2-4}
			& \multirow{2}*{$(10,8)$} & $(-3.5,3.5)$ & 0.608336\\
			\cline{3-4}
			& & $(-5,5)$ & 0.607564\\
			\hline
			\multirow{4}*{$B=10$} & \multirow{2}*{$(11,14)$} & $(7,-7)$ & 0.607236\\
			\cline{3-4}
			&  & $(9.5,-9.5)$ & 0.607387\\
			\cline{2-4}
			& \multirow{2}*{$(12,10)$} & $(-7.5,7.5)$ & 0.608467\\
			\cline{3-4}
			& & $(-8,8)$ & 0.606947\\
			\hline
			\multirow{4}*{$B=15.5$} & \multirow{2}*{$(16,18)$} & $(11.5,-11.5)$ & 0.608013\\
			\cline{3-4}
			&  & $(14,-14)$ & 0.607194\\
			\cline{2-4}
			& \multirow{2}*{$(20,17)$} & $(-12,12)$ & 0.607943\\
			\cline{3-4}
			& & $(-13.5,13.5)$ & 0.607483\\
			\hline
		\end{tabular}
		\caption{Density of visible  points in perturbed P\'{o}lya's walk  on  $\mathbb{Z}^2$}\label{tab:the generalized polya walk in two}
	\end{table}

	\begin{table}[h]
		\begin{tabular}{|c|c|c|c|}
			\hline
			&  ${\bf p}_0$ & ${\boldsymbol{\beta}}$ & Numerical density\\
			\hline
			\multirow{4}*{$B=1$} & \multirow{2}*{$(1,1,1)$} & $(0,0,0)$ & 0.831783\\
			\cline{3-4}
			&  & $(0.2,-0.3,0.1)$ & 0.831642\\
			\cline{2-4}
			& \multirow{2}*{$(2,3,6)$} & $(-0.3,0.5,-0.2)$ & 0.831845\\
			\cline{3-4}
			& & $(-0.7,-0.1,0.8)$ & 0.831466\\
			\hline
			\multirow{4}*{$B=6.5$} & \multirow{2}*{$(7,9,7)$} & $(2.5,2,-4.5)$ & 0.832073\\
			\cline{3-4}
			&  & $(3,-4.5,1.5)$ & 0.832442\\
			\cline{2-4}
			& \multirow{2}*{$(10,8,9)$} & $(-2.5,4.5,-2)$ & 0.832214\\
			\cline{3-4}
			& & $(-1,5.5,-4.5)$ & 0.831663\\
			\hline
			\multirow{4}*{$B=10$} & \multirow{2}*{$(11,14,15)$} & $(7,-3.5,-3.5)$ & 0.831966\\
			\cline{3-4}
			&  & $(8,1.5,-9.5)$ & 0.831787\\
			\cline{2-4}
			& \multirow{2}*{$(12,10,10)$} & $(-7.5,7.5,0)$ & 0.831789\\
			\cline{3-4}
			& & $(-8,-1.5,9.5)$ & 0.832312\\
			\hline
			\multirow{4}*{$B=15.5$} & \multirow{2}*{$(16,18,19)$} & $(11.5,-2,-9.5)$ & 0.831314\\
			\cline{3-4}
			&  & $(14,-8.5,-5.5)$ & 0.831345\\
			\cline{2-4}
			& \multirow{2}*{$(20,17,18)$} & $(-12,11.5,0.5)$ & 0.832540\\
			\cline{3-4}
			& & $(-13.5,-0.5,14)$ & 0.832322\\
			\hline
		\end{tabular}
		\caption{Density of visible  points in perturbed P\'{o}lya's walk on  $\mathbb{Z}^3$}\label{tab:the generalized polya walk in three}
	\end{table}

	\subsection{Visible lattice  points in twisted P\'{o}lya's walk}
	We consider another generalization of the Polya's walk in this subsection. 
	\begin{definition}[Twisted P\'{o}lya's walk]\label{def:twisted polya walk}
		Let  $\boldsymbol{\gamma}_r=(\gamma_{r,1},\cdots,\gamma_{r,k})$, $\forall 1\leq r\leq k$ be fixed non-zero vectors such that $0\leq\gamma_{r,1},\cdots,\gamma_{r,k}\leq1$ for each $r$ and $\boldsymbol{\gamma}_1+\cdots+\boldsymbol{\gamma}_k={\bf 1}=(1,\cdots,1)$, we define the twisted P\'{o}lya's walk $({\bf q}_{i})_{i\geq 0}$ with the starting point ${\bf q}_0=(q_{0,1},\cdots,q_{0,k})\in\mathbb{Z}^k_{>0}$:
		$$
		{\bf q}_{i+1}={\bf q}_{i}+{\bf Z}_{i+1}
		$$
		for $i=0,\ 1,\cdots$, where the random vectors
		\begin{align*}
			{\bf Z}_{i+1}:=\left\{
			\begin{aligned}
				&(1,0,\cdots,0),\ \ {{with\ probability}}\ \frac{\boldsymbol{\gamma}_1\cdot {\bf q}_{i}}{s({\bf q}_{i})},\\
				&(0,1,\cdots,0),\ \ {{with\ probability}}\ \frac{\boldsymbol{\gamma}_2\cdot {\bf q}_{i}}{s({\bf q}_{i})},\\
				&\quad\quad\quad\quad\cdots\\
				&(0,0,\cdots,1),\ \ {{with\ probability}}\ \frac{\boldsymbol{\gamma}_k\cdot {\bf q}_{i}}{s({\bf q}_{i})}.\\
			\end{aligned}
			\right.
		\end{align*}
		with $\boldsymbol{\gamma}\cdot{\bf q}$ denotes the inner product of the two vectors.
	\end{definition}
	For the special case $\boldsymbol{\gamma}_r={\bf e}_{r}$ for all $1\leq r\leq k$ (here ${\bf e}_{r}\in\mathbb{Z}^k$ stands for the vector whose $r$th component is 1 and 0 elsewhere), the above twisted walk gives the standard P\'{o}lya's walk on $\mathbb{Z}^k$. 
	
	For the twisted P\'{o}lya's walk, we are not equipped with  tools to theoretically derive the density of visible lattice points except for the special case when $\boldsymbol{\gamma}_r={\bf e}_{r}$. This is because in the proof of Theorem \ref{thm:R(S)=}, we take advantage of the fact that the joint mass  function of the jumps ${\bf Y}_i$ only depend on the number of ${\bf e}_1,\cdots,{\bf e}_k$ appearing among ${\bf Y}_1,\cdots,{\bf Y}_n$ but not on the order of ${\bf Y}_i$ (see Lemma \ref{lem:the joint mass function}), while in the generic  twisted P\'{o}lya's walk, the joint mass function of these ${\bf Z}_i$ does not have such property.
	Nevertheless, we do some experiments by Python to numerically predict the value of this density in the twisted walk. Here we also focus on the case $k=2$ or 3 and take some values of $\boldsymbol{\gamma}_r$ and ${\bf q}_0$, we give the numerical densities in Tables \ref{tab:twisted walk in two} and \ref{tab:twisted walk in three}, respectively.
	
	\begin{table}[h]
		\begin{tabular}{|c|c|c|c|}
			\hline
			$\boldsymbol{\gamma}_1$ & $\boldsymbol{\gamma}_2$ &Numerical for ${\bf q}_0=(1,1)$& Numerical for ${\bf q}_0=(2,4)$\\
			\hline
			(0,\ 1) & (1,\ 0) & 0.608150&0.608135\\
			\hline
			(0.1,\ 0.2) & (0.9,\ 0.8) & 0.607663& 0.607611\\
			\hline
			(0.1,\ 0.35) & (0.9,\ 0.65) & 0.608554& 0.608025\\
			\hline
			(0.3,\ 0.45) & (0.7,\ 0.55) & 0.607971& 0.607023\\
			\hline
			(0.3,\ 0.6) & (0.7,\ 0.4) & 0.607800& 0.608095\\
			\hline
			(0.6,\ 0.2) & (0.4,\ 0.8) &0.607967& 0.607320\\
			\hline
			(0.6,\ 0.6) & (0.4,\ 0.4) & 0.607713& 0.607452\\
			\hline
			(0.9,\ 0.75) & (0.1,\ 0.25) & 0.607861& 0.608326\\
			\hline
			(0.9,\ 0.9) & (0.1,\ 0.1)&0.607695& 0.607914\\
			\hline
			(1,\ 0) & (0,\ 1) & 0.608398& 0.608098\\
			\hline
		\end{tabular}
		\caption{Density of visible  points in twisted P\'{o}lya's walk on   $\mathbb{Z}^2$}\label{tab:twisted walk in two}
	\end{table}
	
	\begin{table}[h]
		\begin{tabular}{|c|c|c|c|c|}
			\hline
			$\boldsymbol{\gamma}_1$ & $\boldsymbol{\gamma}_2$& $\boldsymbol{\gamma}_3$&  ${\bf q}_0=(1,1,1)$& ${\bf q}_0=(2,3,6)$\\
			\hline
			(0,\ 1,\ 0) & (0,\ 0,\ 1)& (1,\ 0,\ 0) & 0.832074&0.832303\\
			\hline
			(0,\ 0,\ 1) & (1,\ 0,\ 0)& (0,\ 1,\ 0) & 0.831954&0.832463\\
			\hline
			(0.1,\ 0.2,\ 0.3) & (0.1,\ 0.3,\ 0.4) & (0.8,\ 0.5,\ 0.3) &0.831710&0.832078\\
			\hline
			(0.2,\ 0.3,\ 0.4) & (0.2,\ 0.5,\ 0.6)& (0.6,\ 0.2,\ 0)  & 0.831557&0.831922\\
			\hline
			(0.3,\ 0.1,\ 0.6) & (0.3,\ 0.4,\ 0.2)& (0.4,\ 0.5,\ 0.2) & 0.831985&0.831551\\
			\hline
			(0.5,\ 0.1,\ 0.4) & (0.5,\ 0.6,\ 0.5)& (0,\ 0.3,\ 0.1)& 0.832265&0.832151\\
			\hline
			(0.7,\ 0.1,\ 0.1) & (0.1,\ 0.9,\ 0.2) & (0.2,\ 0,\ 0.7) &0.831360&0.832062\\
			\hline
			(0.8,\ 0.2,\ 0.1) & (0.1,\ 0.7,\ 0.1)& (0.1,\ 0.1,\ 0.8) & 0.831532&0.832084\\
			\hline
			(0.9,\ 0.1,\ 0.1) & (0.1,\ 0.8,\ 0.1)& (0,\ 0.1,\ 0.8) & 0.831906&0.832529\\
			\hline
			(1,\ 0,\ 0) & (0,\ 1,\ 0)& (0,\ 0,\ 1) & 0.831914&0.831933\\
			\hline 
		\end{tabular}
		\caption{Density of visible  points in twisted P\'{o}lya's walk on  $\mathbb{Z}^3$}\label{tab:twisted walk in three}
	\end{table}
	We find that, on  one hand, the numerical values of the corresponding densities in the above two tables are very close to $1/\zeta(2)$ and $1/\zeta(3)$, respectively; on the other hand, as we have already mentioned above, the twisted walk is the standard walk for $\boldsymbol{\gamma}_r={\bf e}_{r},\ 1\leq r\leq k$,  the numerical calculations show that as we vary the values of $\boldsymbol{\gamma}_r$ away from ${\bf e}_r$, we didn't observe any dramatic change  compared to the standard P\'{o}lya's walk of which case we have proved theoretically.
	Hence we propose the following conjecture.
	\begin{conjecture}\label{thm:R_N^'=}
		For any integer $k\geq 2$ and any given initial point ${\bf q}_0\in\mathbb{Z}^k_{>0}$, the density of visible lattice points visited by the twisted  P\'{o}lya's walk in Definition \ref{def:twisted polya walk} is almost surely
		${1}/{\zeta(k)}$.
	\end{conjecture}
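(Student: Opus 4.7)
The plan is to extend the strategy behind Theorem~\ref{thm:R(S)=}, abandoning the exchangeability of the jumps and replacing it with an asymptotic mixing argument on congruence classes. Let $\Gamma$ denote the $k\times k$ matrix whose $j$-th row is $\boldsymbol{\gamma}_j$; the hypothesis $\boldsymbol{\gamma}_1+\cdots+\boldsymbol{\gamma}_k={\bf 1}$ says that $\Gamma^T$ is row-stochastic, and the conditional mean increment of the walk is $\mathbb{E}[{\bf Z}_{i+1}\mid\mathcal{F}_i]=\Gamma{\bf q}_i/s({\bf q}_i)$. Because $s({\bf q}_i)=s({\bf q}_0)+i$ grows deterministically, the normalized composition ${\bf X}_n:={\bf q}_n/(s({\bf q}_0)+n)$ obeys a Robbins--Monro type recursion whose fixed points in the simplex are the right eigenvectors of $\Gamma$. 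The generalized P\'olya urn theory of Athreya--Karlin and Janson then ensures that ${\bf X}_n$ converges almost surely to a limit ${\bf X}_\infty$ in the simplex interior: when $\Gamma\ne I$ and $\Gamma^T$ is irreducible, ${\bf X}_\infty$ is the deterministic stationary distribution of $\Gamma^T$; in the classical case $\Gamma=I$ (already covered by Theorem~\ref{thm:R(S)=}), ${\bf X}_\infty$ is Dirichlet distributed.

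Next I would analyze the residue chain $({\bf q}_n\bmod m)$ for an arbitrary modulus $m\ge 2$. Once ${\bf X}_n$ is close to ${\bf X}_\infty={\bf x}$, the transition probabilities converge to the fixed simplex vector $(\boldsymbol{\gamma}_j\cdot{\bf x})_{1\le j\le k}$, which sums to $1$ since $\sum_j\boldsymbol{\gamma}_j={\bf 1}$, and whose entries are strictly positive because each $\boldsymbol{\gamma}_j$ is a non-zero vector with non-negative entries and ${\bf x}$ lies in the interior of the simplex. Hence the residue chain is asymptotically a random walk on $(\ZZ/m\ZZ)^k$ whose increments are the standard basis vectors with positive weights, and these increments generate $(\ZZ/m\ZZ)^k$. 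Since every step raises $s(\cdot)$ by $1$, however, the walk at time $n$ lives on the affine hyperplane $H_{n,m}:=\{{\bf y}\in(\ZZ/m\ZZ)^k:\sum_r y_r\equiv s({\bf q}_0)+n\pmod m\}$. A Fourier-analytic computation on $(\ZZ/m\ZZ)^k$, using that the characteristic function of a single increment has modulus strictly less than $1$ on every additive character non-trivial on $H_{n,m}$, yields exponentially fast equidistribution on $H_{n,m}$ conditional on ${\bf X}_\infty$, and in particular $\mathbb{P}(m\mid\gcd({\bf q}_n))=m^{-k}+o(1)$ for each fixed $m$.

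From here I would invoke M\"obius inversion $V_n=\sum_{d\mid\gcd({\bf q}_n)}\mu(d)$, giving $\mathbb{E}[V_n]=\sum_d\mu(d)\,\mathbb{P}(d\mid\gcd({\bf q}_n))$. Truncating the $d$-sum at $d\le (s({\bf q}_0)+n)^{1/k-\varepsilon}$ and combining it with a uniform upper bound $\mathbb{P}(d\mid\gcd({\bf q}_n))\ll d^{-k}$ would then yield $\mathbb{E}[V_n]\to 1/\zeta(k)$. To upgrade this to almost sure convergence of the empirical density in Conjecture~\ref{thm:R_N^'=}, I would estimate $\mathrm{Cov}(V_n,V_{n'})$ through a coupling between the twisted walk, conditioned on ${\bf X}_\infty$, and a random walk with genuinely i.i.d.\ multinomial$({\bf X}_\infty)$ increments, showing that trajectories at widely separated times are asymptotically independent; a Chebyshev plus dyadic Borel--Cantelli argument then promotes the resulting $L^2$ convergence to almost sure convergence.

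The principal obstacle is precisely the gap flagged by the authors, namely producing a uniform bound $\mathbb{P}(d\mid\gcd({\bf q}_n))\ll d^{-k}$ across all moduli $d$ up to a positive power of $n$, since without exchangeability the Dirichlet representation used in the proof of Theorem~\ref{thm:R(S)=} is unavailable. This requires quantitative mixing estimates on $(\ZZ/d\ZZ)^k$ for a time-inhomogeneous walk whose transition law is only asymptotically constant, with error terms uniform in $d$, and the natural approach is a two-step coupling, first to the ``frozen'' walk with transitions $(\boldsymbol{\gamma}_j\cdot{\bf X}_\infty)_j$ and then to an i.i.d.\ model tractable by Fourier analysis. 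A secondary complication is that when ${\bf X}_\infty$ is random, every estimate must be carried out conditionally on ${\bf X}_\infty$, and the concluding Borel--Cantelli step must go through on almost every realization of the limiting direction.
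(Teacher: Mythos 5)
First, note that the statement you are addressing is stated in the paper as Conjecture \ref{thm:R_N^'=}: the authors explicitly say they lack the tools to prove it (the Dirichlet/exchangeability representation of Lemma \ref{lem:the joint mass function} breaks down for general $\boldsymbol{\gamma}_r$), and they offer only numerics and a heuristic. So there is no proof in the paper to compare against, and your text must be judged as a standalone argument. As such, it is a reasonable and well-informed roadmap --- the reduction to a generalized urn limit ${\bf X}_\infty$ via Athreya--Karlin/Janson, the passage to the residue chain modulo $m$ on the hyperplane $\sum_r y_r\equiv s({\bf q}_0)+n$, and the M\"obius/second-moment endgame are all the right ingredients --- but it is not a proof. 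You yourself flag the decisive missing step: a quantitative equidistribution estimate modulo $d$, uniform for $d$ up to a power of $n$, for a time-inhomogeneous walk whose transition law is only asymptotically constant. Everything in the mean and variance computations hinges on that estimate, and it is exactly the point where the paper's method (Lemma \ref{lem:sum over u}, proved by explicit character sums against an exchangeable multinomial law) has no obvious analogue. Identifying the obstacle is not the same as overcoming it.

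Beyond that central gap, two concrete points in your sketch are wrong or incomplete as written. First, the asserted asymptotic $\mathbb{P}(m\mid\gcd({\bf q}_n))=m^{-k}+o(1)$ for fixed $m$, and the target uniform bound $\mathbb{P}(d\mid\gcd({\bf q}_n))\ll d^{-k}$, are both false pointwise in $n$: since $s({\bf q}_n)=s({\bf q}_0)+n$ deterministically, $d\mid\gcd({\bf q}_n)$ forces $d\mid n+s({\bf q}_0)$, so the probability is $0$ off that progression and of order $d^{-(k-1)}$ on it (this is precisely why the paper's main term is $\sum_{d\mid n+s({\bf p}_0)}\mu(d)/d^{k-1}$, averaged over $n$ afterwards). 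The correct uniform target is $\ll d^{-(k-1)}$ restricted to $d\mid n+s({\bf q}_0)$; with your stated bound the truncation step does not even make sense for $k=2$. Second, the conjecture's hypotheses allow $\Gamma$ reducible or periodic (the paper's own Table \ref{tab:twisted walk in two} includes $\boldsymbol{\gamma}_1=(0,1)$, $\boldsymbol{\gamma}_2=(1,0)$), in which case ${\bf X}_\infty$ may lie on the boundary of the simplex or the limiting increment law may degenerate; your claim that the limiting transition probabilities $\boldsymbol{\gamma}_j\cdot{\bf X}_\infty$ are strictly positive, which underlies the spectral gap in the Fourier argument, is not justified in those cases and the argument as sketched does not cover the full range of the conjecture.
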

	
	Here we give a heuristic why we believe this conjecture. We may consider this twisted P\'{o}lya's walk in the following new coordinate system, 
	\[ y_j:=\boldsymbol{\gamma}_j\cdot (x_1, x_2, \ldots, x_k),\ 1\le j\le k. \]
	Let ${\bf q}'_{i}=(q'_{i,1}, \ldots, q'_{i,k})$ be the $i$-th step coordinate of the walker in the new coordinate system. Then the probability distribution of walking to each direction is the same as in the standard P\'{o}lya's walk in the new system, i.e.
	\[ \mathbb{P}(\text{walking to the}~j\text{-th direction})= \frac{q'_{i,j}}{q'_{i,1}+\cdots+q'_{i,k}},\ 1\le j\le k.\]
	However, after changing of variables, the possible waking directions in the new system are not the same as the positive axis directions anymore. This is where the difficulty is. Instead of considering visible steps in the first quadrant, in the new system we need to consider the distribution of visible steps in a restricted sector. In a sector shape, the density of all visible lattice points should be the same as in the whole plane. Hence, it is reasonable to believe this density result is also true for visible steps in such a P\'{o}lya's walk. Moreover, numerical experiments give strong support for our conjecture.

	%\subsection{Brief explanation of our approach of Theorem \ref{thm:R(S)=}}
	%In order to study the limit $\lim_{N\rightarrow\infty}\overline{R}_{N,k}$, by Lemma \ref{lem:lim R_n=u}, we need to get certain types of bounds for the expectation and variance of $\overline{R}_{N,k}$. And by the definition of $\mathbb{E}(\overline{R}_{N,k})$ and $\mathbb{V}(\overline{R}_{N,k})$, our main work is to compute $\mathbb{E}(V_n)$ and $\mathbb{E}(V_nV_m)$. For this purpose, we take advantage of the fact that the joint mass function of the jumps $({\bf Y}_i)_{i\geq1}$ does not depend on their order (see Lemma \ref{lem:the joint mass function}), and write $\mathbb{E}(V_n)$ and $\mathbb{E}(V_nV_m)$ as integrals related to the Dirichlet distribution. Fortunately, we can deal with these integrals by resorting to a key lemma (Lemma \ref{lem:sum over u}) proved by number-theoretic tools and the properties of the Dirichlet distribution. Eventually, we are able to handle the corresponding density in perturbed P\'{o}lya's walk in any higher dimensions.
	\vspace{0.3cm}

	\noindent\textbf{Notations.} As usual, $\mathbb{C}$, $\mathbb{R}$, $\mathbb{Z}$ and $\mathbb{Z}_{>0}$ denote the sets of complex numbers, real numbers, integers and positive integers, respectively; $\Re(s)$ means the real part of a complex number $s$;  $\varepsilon$ denotes a sufficiently small positive number and $p$ represents a prime.

 We apply $\log (x)$ for the logarithmic function $\ln x$ and $\gcd(m_1,\cdots,m_l)$ for the greatest common divisor of integers $m_1,\cdots,m_l$ which are not all zero.
 
  We  use $\mathbb{P}$, $\mathbb{E}$ and $\mathbb{V}$ to mean taking probability, expectation and variances, respectively.
	
	The expression $f(n)=O(g(n))$ (or $f\ll g$) as $n\rightarrow\infty$ means there exist an integer $M\geq 1$ and a constant $C>0$ such that $|f(n)|\leq Cg(n)$ for $n\geq M$ . When the constant $C$ depends on some parameters ${\bf \rho}$, we write $f=O_{\bf \rho}(g)$(or $f\ll_\rho g$) as $n\rightarrow\infty$.
	
	%For $l\ (l\geq1)$-dimensional random vectors on diverse underlying spaces, the probabilities, expectations and variances of such random vectors will be denoted by $\mathbb{P}$, $\mathbb{E}$ and $\mathbb{V}$.

	Let $k\geq 2$ be an integer, the vector ${\bf e}_r\in\mathbb{Z}^k,\ 1\leq r\leq k$, denotes the  $k$-dimensional vector whose $r$th component is 1 and 0 elsewhere.
	
	For vector ${\bf v}=(v_1,\cdots,v_k)\in\mathbb{Z}^k$, we always write $s({\bf v}):=v_1+\cdots+v_k$.
	
	For integer $k\geq 2$ and nonnegative integers $n,\ u_1,\cdots,u_k$ with $u_1+\cdots+u_k=n$, we always write the multinomial coefficient
	$$
	\binom{n}{u_1,\cdots,u_k}:=\frac{n!}{u_1!\cdots u_k!}.
	$$
	As a convention, the above formula is of value $1$ for $n=0$.
	Besides, assume ${\boldsymbol{\alpha}}:=(\alpha_1,\cdots,\alpha_{k-1})$ satisfying $0<\alpha_1,\cdots,\alpha_{k-1}<1$ and $\alpha_1+\cdots+\alpha_{k-1}<1$ and ${\bf u}=(u_1,\cdots,u_k)$, we denote
	\begin{equation}\label{eq:def of P_{n,u}}
		P_{n,{\bf u},{\boldsymbol{\alpha}}}:=\binom{n}{u_1,\cdots,u_k}\alpha_1^{u_1}\cdots\alpha_{k-1}^{u_{k-1}}\Big(1-\sum_{r=1}^{k-1}\alpha_r\Big)^{u_k}.
	\end{equation}

	\section{Preliminaries}\label{sec:preliminaries}
	
	\subsection{Review of number-theoretic functions and related results} 
	
	For M\"{o}bius function $\mu(d)$, $d\in\mathbb{Z}_{>0}$, it is known that
	\begin{equation}\label{eq:the mobius inversion formula}
		\sum_{d\mid n}\mu(d)=\left\{
		\begin{aligned}
			&1, \ \ n=1,\\
			&0,\ \ n>1.
		\end{aligned}
		\right.
	\end{equation}
	Further, for integer $l\geq 2$, we have the identity
	\begin{equation}\label{eq:the mobius and zeta}
		\sum_{d=1}^\infty\frac{\mu(d)}{d^l}=\frac{1}{\zeta(l)}.
	\end{equation}

	For divisor function $\tau(d)=\sum_{t\mid d}1$,
	one may refer to page 296 of \cite{A} to get
	an upper bound $\tau(d)\ll_\varepsilon d^{\varepsilon}$ for any $\varepsilon>0$.

	The Euler gamma function $\Gamma(s)$, $s\in\mathbb{C}$, has the functional equation
	$\Gamma(s+1)=s\Gamma(s)$.
	Moreover, $\Gamma(s)$ satisfies the well-known Dirichlet integral formula (see \cite{D-2} and \cite{GR}), namely, for integer $l\geq 1$ and real numbers $b_1,\cdots,b_{l+1}>0$, we have
	
	\begin{equation}\label{eq:Dirichlet integral (1)}
		\frac{\Gamma(b_1)\cdots\Gamma(b_{l+1})}{\Gamma(b_1+\cdots+b_{l+1})}=\int_{\rm{T}}t_1^{b_1-1}\cdots t_{l}^{b_{l}-1}\Big(1-\sum_{m=1}^{l}t_m\Big)^{b_{l+1}-1}dt_1\cdots dt_{l},
	\end{equation}
	where the integration area $T$ is
	\begin{equation}\label{eq: the area T=}
		{\rm{T}}:=\bigg\{(t_1,\cdots,t_{l}):\ 0<t_m<1,\ \sum_{m=1}^{l}t_m<1\bigg\}.
	\end{equation}
	The right hand side of the above \eqref{eq:Dirichlet integral (1)} is the so-called  multivariate Beta function:
	\begin{equation}\label{eq: the def of beta}
		{{\rm{Beta}}({\bf b})}=\int_{\rm{T}}t_1^{b_1-1}\cdots t_{l}^{b_{l}-1}\Big(1-\sum_{m=1}^{l}t_m\Big)^{b_{l+1}-1}dt_1\cdots dt_{l},
	\end{equation}
	where ${\bf b}=(b_1,\cdots,b_{l+1})$.
	
	\subsection{The Dirichlet distribution} 
	The Dirichlet distribution is also known as the multivariate Beta distribution. We give the following definition. Further properties of the Dirichlet distribution can be found in \cite{BFG} and \cite{WTT}.
	\begin{definition}
		Let  $l\geq1$ be an integer and ${\bf a}=(a_1,\cdots,a_{l+1})$ with all $a_m>0$. A continuous random vector ${\bf T}=(T_1,\cdots,T_{l})$ has a Dirichlet distribution of dimension $l$, with parameter ${\bf a}$, if the probability density function ${\rm{Di}}_l({\bf t}|{{\bf a}})$ is 
		$$
		{\rm{Di}}_l({\bf t}|{{\bf a}})=\prod_{m=1}^lt_m^{a_m-1}\Big(1-\sum_{m=1}^{l}t_m\Big)^{a_{l+1}-1}\frac{1}{\rm{Beta}({\bf a})},
		$$
		where
		${\bf t}=(t_1,\cdots,t_{l})$ satisfying $0<t_m<1$ for any $1\leq m\leq l$ and $\sum_{m=1}^lt_m<1$. 
	\end{definition}
	
	Let $t_{l+1}:=1-\sum_{m=1}^{l}t_m$, and ${\rm T}$ be given by \eqref{eq: the area T=}, then  for  $m\in\{1,\cdots,l\}$,
	\begin{equation}\label{eq: bound for Dir for -1/2}
		\int_{\rm{T}}(t_mt_{l+1})^{-1/2}{\rm{Di}}_l({\bf t}|{{\bf a}})dt_1\cdots dt_{l}<\infty
	\end{equation}
	if and only if $a_m>1/2$ and $a_{l+1}>1/2$; and
	\begin{equation}\label{eq: bound for Dir for -1}
		\int_{\rm{T}}(t_mt_{l+1})^{-1}{\rm{Di}}_l({\bf t}|{{\bf a}})dt_1\cdots dt_{l}<\infty
	\end{equation} 
	if and only if $a_m>1$ and $a_{l+1}>1$.

	\subsection{Some useful lemmas from number theory}
	This subsection gives several results needed for this paper. We begin with the criteria of visibility of lattice points on $\mathbb{Z}^k$.
	
	\begin{lemma}[\cite{R-thesis}] \label{lem:the critia of visibility}
		For any integer $k\geq 2$ and ${\bf n}=(n_1,\cdots,n_k)\in\mathbb{Z}^k$, the lattice point ${\bf n}$ is visible  if and only if $\gcd(n_1,\cdots,n_k)=1$.
	\end{lemma}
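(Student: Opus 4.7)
The plan is to prove the biconditional via an explicit parametrization of the segment joining $\mathbf{0}$ and $\mathbf{n}$ (implicitly assuming $\mathbf{n}\neq\mathbf{0}$, so that $\gcd(n_1,\ldots,n_k)$ is well defined). The open segment is the set of points $t\mathbf{n}$ with $t\in(0,1)$, so by the definition of visibility, $\mathbf{n}$ is visible from the origin precisely when no such $t\mathbf{n}$ lies in $\mathbb{Z}^k$. Both directions are then short.

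For the easy direction (``$\gcd>1$ blocks visibility''), I argue by contrapositive. If $d:=\gcd(n_1,\ldots,n_k)\geq 2$, then $\mathbf{n}/d=(n_1/d,\ldots,n_k/d)$ is an integer vector corresponding to the parameter $t=1/d\in(0,1)$, so it sits strictly between $\mathbf{0}$ and $\mathbf{n}$ on the joining segment and therefore obstructs visibility.

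For the converse, suppose $\mathbf{n}$ is not visible and pick a lattice point $\mathbf{m}\in\mathbb{Z}^k$ with $\mathbf{m}=t\mathbf{n}$ for some $t\in(0,1)$. Choose any index $i$ with $n_i\neq 0$ (which exists since $\mathbf{n}\neq\mathbf{0}$); then $t=m_i/n_i$ is rational in $(0,1)$, and writing $t=p/q$ in lowest terms forces $q\geq 2$. The equality $qm_j=pn_j$ combined with $\gcd(p,q)=1$ gives $q\mid n_j$ for every $j$ with $n_j\neq 0$ (for vanishing coordinates the divisibility is automatic, since $m_j=0$ as well), whence $q\mid\gcd(n_1,\ldots,n_k)$ and the gcd is at least $q\geq 2$, contradicting $\gcd=1$.

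The argument has no substantial obstacle; the only mild care lies in handling coordinates that vanish and in extracting rationality of $t$ from the fact that both $\mathbf{m}$ and $\mathbf{n}$ have integer entries. Since this is a classical criterion due to Rearick, I would also simply cite \cite{R-thesis} in the write-up rather than spelling out every detail.
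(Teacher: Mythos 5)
Your proof is correct and complete; the paper itself gives no proof of this lemma, simply citing Rearick's thesis, and your argument is exactly the standard one (the point $\mathbf{n}/d$ obstructs visibility when $d=\gcd\geq 2$, and conversely an interior lattice point $t\mathbf{n}$ forces a rational $t=p/q$ in lowest terms with $q\geq 2$ dividing every coordinate). The handling of vanishing coordinates and the caveat $\mathbf{n}\neq\mathbf{0}$ are the only delicate points, and you address both.
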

	The next two lemmas are used several times in Section \ref{sec:proof of E(s) and V(s)}.
	\begin{lemma}[\cite{C(1)}] \label{lem: sum 1/n}
		We have
		$$
		\sum_{1\leq n\leq x}n^{-1}=\log x+\gamma+O(x^{-1})
		$$
		as $x\rightarrow\infty$, where $\gamma$ is Euler's constant, and for any real number $\theta>1$ and $x\geq1$,
		$$
		\sum_{1\leq n\leq x}n^{-1/2}=O(x^{1/2}) ,\ \ \ \sum_{n>x}n^{-\theta}=O_{\theta}(x^{1-\theta}).
		$$
	\end{lemma}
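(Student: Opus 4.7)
My plan is to derive all three estimates by integral comparison, handling the cases separately according to the accuracy required by the stated error term.

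For the harmonic sum, I would begin with the telescoping identity
\[ \sum_{n=1}^{N} \frac{1}{n} - \log(N+1) = \sum_{n=1}^{N} \left( \frac{1}{n} - \log\!\frac{n+1}{n}\right). \]
Expanding the logarithm shows each summand equals $\frac{1}{2n^{2}} + O(n^{-3})$ and is positive, so the right-hand side converges absolutely as $N\to\infty$; calling the limit $\gamma$ (and checking afterwards that it agrees with Euler's constant via the standard characterization $\gamma = \lim_{N}\bigl(\sum_{n\le N} 1/n - \log N\bigr)$), the tail $\sum_{n>N}\bigl(1/n - \log((n+1)/n)\bigr)$ is $O(1/N)$ by comparison with $\sum_{n>N} n^{-2}$. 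Replacing $\log(N+1)$ by $\log N + O(1/N)$, setting $N=\lfloor x\rfloor$, and noting that the discrepancy from replacing $\lfloor x\rfloor$ by $x$ is absorbed into $O(x^{-1})$, delivers the first formula.

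For the two remaining bounds the function $t\mapsto t^{-1/2}$ (resp.\ $t\mapsto t^{-\theta}$) is decreasing on $(0,\infty)$, so monotone integral comparison gives
\[ \sum_{1\le n\le x} n^{-1/2} \le 1 + \int_{1}^{x} t^{-1/2}\,dt = 1 + 2(x^{1/2} - 1) = O(x^{1/2}), \]
and, for $\theta>1$,
\[ \sum_{n>x} n^{-\theta} \le \int_{x}^{\infty} t^{-\theta}\,dt = \frac{x^{1-\theta}}{\theta-1}, \]
which yields the tail bound with an implicit constant $1/(\theta-1)$, consistent with the $O_\theta$-notation.

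I do not foresee a genuine obstacle in this plan, since all three estimates are classical facts from analytic number theory and are precisely the contents of the reference \cite{C(1)}. The only mildly delicate point is producing the sharp $O(x^{-1})$ error in the harmonic estimate; this is the reason I would subtract the logarithmic main term through the telescoping identity rather than attempting a direct integral comparison, which by itself would only yield an $O(1)$ error.
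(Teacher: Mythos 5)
The paper does not prove this lemma at all: it is quoted verbatim from the reference \cite{C(1)} and used as a black box, so there is no internal proof to compare against. Your self-contained derivation is correct and is the standard one: the telescoping identity $\sum_{n\le N} n^{-1}-\log(N+1)=\sum_{n\le N}\bigl(n^{-1}-\log\frac{n+1}{n}\bigr)$ with summands $\tfrac{1}{2n^2}+O(n^{-3})$ gives convergence to $\gamma$ with an $O(1/N)$ tail, and monotone comparison handles the other two sums; this correctly produces the sharp $O(x^{-1})$ error term that a bare integral comparison would miss. One small imprecision: for the tail sum the comparison of a decreasing function actually gives $\sum_{n>x}n^{-\theta}\le\int_{\lfloor x\rfloor}^{\infty}t^{-\theta}\,dt=\lfloor x\rfloor^{1-\theta}/(\theta-1)$ rather than $\int_{x}^{\infty}t^{-\theta}\,dt$ (the literal inequality you wrote can fail, e.g.\ for $\theta=2$ and $x$ slightly below $2$), but since $\lfloor x\rfloor\gg x$ for $x\ge 1$ this still yields $O_{\theta}(x^{1-\theta})$, so the conclusion is unaffected.
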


	\begin{lemma}[\cite{LM-1}, Lemma 2.9]\label{lem:the sum over i<j}
		For any real number $x\geq 2$, we have
		$$
		\sum_{1\leq n<m\leq x}n^{-1/2}=O(x^{3/2})
		\quad{\text{and}}\quad
		\sum_{1\leq n<m\leq x}(m-n)^{-1/2}=O(x^{3/2}).
		$$
	\end{lemma}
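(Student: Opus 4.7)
The plan is to handle both sums by the same two-step reduction: in each case I would interchange (or re-parametrize) the order of summation so that the inner index contributes only a counting factor, and then invoke Lemma~\ref{lem: sum 1/n} on the remaining single sum.

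For $\sum_{1\le n<m\le x} n^{-1/2}$, I would sum over $m$ first with $n$ held fixed: for each integer $n\in\{1,\dots,\lfloor x\rfloor\}$, the index $m$ runs over $\{n+1,\dots,\lfloor x\rfloor\}$, a set of size $\lfloor x\rfloor-n\le x$. This reduces the double sum to at most $x\sum_{1\le n\le x}n^{-1/2}$, and the bound $\sum_{1\le n\le x}n^{-1/2}=O(x^{1/2})$ from Lemma~\ref{lem: sum 1/n} immediately gives $O(x\cdot x^{1/2})=O(x^{3/2})$.

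For $\sum_{1\le n<m\le x}(m-n)^{-1/2}$, the natural move is the change of variables $l=m-n\ge 1$; with $l$ fixed, the variable $n$ runs over $\{1,\dots,\lfloor x\rfloor-l\}$, again at most $x$ integers. Hence the sum is bounded by $x\sum_{1\le l\le x}l^{-1/2}$, which is $O(x^{3/2})$ by the same application of Lemma~\ref{lem: sum 1/n}.

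Neither step is delicate, so there is no real obstacle; the only point to be careful about is that the crude estimate $\lfloor x\rfloor-n\le x$ (respectively $\lfloor x\rfloor-l\le x$) is sufficient. That the resulting bound is of the correct order is explained by a quick heuristic: each double sum contains on the order of $x^{2}$ terms whose typical size is $x^{-1/2}$, giving exactly $x^{3/2}$, and the rearrangement above captures this without any finer analysis.
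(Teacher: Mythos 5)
Your argument is correct: fixing the outer variable ($n$ in the first sum, $l=m-n$ in the second) reduces each double sum to at most $x\sum_{1\le j\le x}j^{-1/2}=O(x^{3/2})$ by Lemma \ref{lem: sum 1/n}, which is exactly the standard estimate. The paper itself gives no proof (it cites the lemma from an external reference), so there is nothing to compare beyond noting that your reduction is the expected one and is complete.
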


	\section{Density of visible lattice points in perturbed P\'{o}lya's walk on $\mathbb{Z}^{k}$}\label{sec:density in polya walk}
	This section gives  proof of Theorem \ref{thm:R(S)=}.
	Without loss generality, we only deal with the perturbed P\'{o}lya's walk with the constant $B=1$ in Definition \ref{Defn-Polya-walk}.
	We start with the following lemma,  which is essentially the second moment method from probability theory.
	\begin{lemma}[\cite{FF}, Proposition 8]\label{lem:lim R_n=u}
		Let $(X_i)_{i\geq1}$ be a sequence of uniformly bounded random variables, and let
		$$
		\overline{S}_N=\frac{1}{N}\sum_{1\leq i\leq N}X_i.
		$$
		If $\lim_{N\rightarrow\infty}\mathbb{E}(\overline{S}_N)=\rho$, and there exists a constant $\delta>0$ such that the variance $\mathbb{V}(\overline{S}_N)=O(N^{-\delta})$ for $N\geq1$, then we have
		$$	\lim_{N\rightarrow\infty}\overline{S}_N=\rho
		$$
		almost surely.
	\end{lemma}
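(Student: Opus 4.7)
The plan is to combine Chebyshev's inequality with the Borel--Cantelli lemma along a sparse subsequence, and then interpolate using the uniform bound on the $X_i$. First I would write $\overline{S}_N - \rho = (\overline{S}_N - \mathbb{E}(\overline{S}_N)) + (\mathbb{E}(\overline{S}_N) - \rho)$; the deterministic second term tends to zero by hypothesis, so the problem reduces to showing $\overline{S}_N - \mathbb{E}(\overline{S}_N) \to 0$ almost surely.

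For the probabilistic part, Chebyshev's inequality together with the variance bound yields, for every fixed $\epsilon>0$,
\[ \mathbb{P}\bigl(|\overline{S}_N - \mathbb{E}(\overline{S}_N)| > \epsilon\bigr) \leq \frac{\mathbb{V}(\overline{S}_N)}{\epsilon^2} \ll \frac{N^{-\delta}}{\epsilon^2}. \]
Since $\delta$ is only assumed positive, the right-hand side need not be summable over all $N$, so direct Borel--Cantelli is unavailable. Instead, I would fix an integer $m$ with $m\delta>1$ and pass to the subsequence $N_j := \lfloor j^m \rfloor$, for which $\sum_j N_j^{-\delta} \ll \sum_j j^{-m\delta} < \infty$. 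Borel--Cantelli then yields $\overline{S}_{N_j} - \mathbb{E}(\overline{S}_{N_j}) \to 0$ almost surely, and hence $\overline{S}_{N_j} \to \rho$ almost surely.

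To upgrade this convergence from the subsequence to every $N$, I would exploit the hypothesis $|X_i|\leq C$. For $N_j \leq N < N_{j+1}$, writing $\overline{S}_N = (N_j/N)\overline{S}_{N_j} + N^{-1}\sum_{i=N_j+1}^N X_i$ and using the uniform bound gives
\[ |\overline{S}_N - \overline{S}_{N_j}| \leq 2C\,\frac{N-N_j}{N} \leq 2C\,\frac{N_{j+1}-N_j}{N_j}, \]
and with $N_j = \lfloor j^m \rfloor$ the right-hand side is $O(1/j) \to 0$. Sandwiching $N$ between consecutive subsequence terms and combining with the previous step delivers $\overline{S}_N \to \rho$ almost surely. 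The one delicate point is calibrating $m$ so that the subsequence is simultaneously sparse enough to make the Borel--Cantelli series converge ($m\delta > 1$) yet dense enough that the interpolation gap $(N_{j+1}-N_j)/N_j$ vanishes; the polynomial scale $\lfloor j^m\rfloor$ balances both requirements, and apart from this calibration the argument relies only on standard tools.
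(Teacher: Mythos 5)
Your proof is correct. Note that the paper does not prove this lemma at all: it is imported verbatim as Proposition 8 of \cite{FF}, so there is no in-paper argument to compare against. Your route --- Chebyshev plus Borel--Cantelli along the polynomially sparse subsequence $N_j=\lfloor j^m\rfloor$ with $m\delta>1$, followed by interpolation between consecutive $N_j$ using the uniform bound $|X_i|\le C$ to control $|\overline{S}_N-\overline{S}_{N_j}|\le 2C(N_{j+1}-N_j)/N_j=O(1/j)$ --- is the standard second-moment-method proof of exactly this statement, and every step checks out (the only implicit point worth making explicit is that almost sure convergence along the subsequence is obtained by intersecting the full-measure events over a countable family $\epsilon=1/k$).
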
	
	By Lemma \ref{lem:lim R_n=u}, the main work is to compute the mean and variance of $\overline{R}_k({N})$. Before that, we consider the joint mass function of the sequence of jumps $({\bf Y}_i)_{i\geq 1}$ defined in Definition \ref{Defn-Polya-walk}. Let ${\bf e}_r\in\mathbb{Z}^k,\ 1\leq r\leq k$, be the  $k$-dimensional  vector whose $r$th component is 1 and 0 elsewhere. We have the following result.
	
	\begin{lemma}\label{lem:the joint mass function} 
		Suppose ${\boldsymbol{\alpha}}:=(\alpha_{1},\cdots,\alpha_{k-1})$ with $0<\alpha_{1},\cdots,\alpha_{k-1}<1$ and $\alpha_{1}+\cdots+\alpha_{k-1}<1$, then for any integer $n\geq 1$ and ${\bf y}_1,\cdots,{\bf y}_n\in\{{\bf e}_1,\cdots,{\bf e}_k\}$, we have
		$$
		\mathbb{P}\big({\bf Y}_1={\bf y}_1,\cdots,{\bf Y}_n={\bf y}_n\big)=\int_{\rm{A}}\alpha_1^{t_n^{(1)}}\cdots\alpha_{k-1}^{t_n^{(k-1)}}\Big(1-\sum_{r=1}^{k-1}\alpha_r\Big)^{t_n^{(k)}}{\rm{Di}}_{k-1}({\boldsymbol{\alpha}}|{\bf p}_0+{\boldsymbol{\beta}})d\alpha_1\cdots d\alpha_{k-1},
		$$
		where $t_n^{(r)}$ is the number of ${\bf e}_r$ among ${\bf y}_1,\cdots,{\bf y}_n$, and
		\begin{equation}\label{eq:the def of area A}
			{\rm{A}}:=\bigg\{{\boldsymbol{\alpha}}=(\alpha_1,\cdots,\alpha_{k-1}):\ 0<\alpha_r<1,\ \sum_{r=1}^{k-1}\alpha_r<1\bigg\}.
		\end{equation}
	\end{lemma}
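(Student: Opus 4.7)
The plan is to verify the identity by a direct Gamma-function evaluation on both sides.

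For the left-hand side, write ${\bf y}_{i+1}={\bf e}_{r_{i+1}}$ and use Definition \ref{Defn-Polya-walk} to multiply out the one-step transition probabilities:
\[ \mathbb{P}({\bf Y}_1 = {\bf y}_1,\ldots,{\bf Y}_n = {\bf y}_n) = \prod_{i=0}^{n-1} \frac{p_{i,r_{i+1}} + \beta_{r_{i+1}}}{s({\bf p}_i)}. \]
Because $\beta_1+\cdots+\beta_k=0$, one has $s({\bf p}_i)=s({\bf p}_0)+i$, so the denominators telescope into $\Gamma(s({\bf p}_0+{\boldsymbol{\beta}})+n)/\Gamma(s({\bf p}_0+{\boldsymbol{\beta}}))$. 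The numerators can then be regrouped by color: the $j$-th time color $r$ is drawn among ${\bf y}_1,\ldots,{\bf y}_n$, the corresponding coordinate of ${\bf p}$ has been incremented $j-1$ times, contributing the factor $p_{0,r}+\beta_r+(j-1)$; multiplying over $j=1,\ldots,t_n^{(r)}$ gives $\Gamma(p_{0,r}+\beta_r+t_n^{(r)})/\Gamma(p_{0,r}+\beta_r)$. Hence the LHS depends only on the color counts $t_n^{(r)}$ and not on the order of the ${\bf y}_i$.

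For the right-hand side, substitute the explicit density
\[ {\rm Di}_{k-1}({\boldsymbol{\alpha}}|{\bf p}_0+{\boldsymbol{\beta}}) = \frac{1}{{\rm Beta}({\bf p}_0+{\boldsymbol{\beta}})}\prod_{r=1}^{k-1}\alpha_r^{p_{0,r}+\beta_r-1}\Big(1-\sum_{r=1}^{k-1}\alpha_r\Big)^{p_{0,k}+\beta_k-1} \]
and absorb the monomial factors $\alpha_r^{t_n^{(r)}}$ and $(1-\sum_r\alpha_r)^{t_n^{(k)}}$ into the exponents. The resulting integral is exactly the multivariate Beta integral with parameter ${\bf b}={\bf p}_0+{\boldsymbol{\beta}}+{\bf t}_n$, and \eqref{eq:Dirichlet integral (1)} identifies it with ${\rm Beta}({\bf p}_0+{\boldsymbol{\beta}}+{\bf t}_n)/{\rm Beta}({\bf p}_0+{\boldsymbol{\beta}})$. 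Expanding both multivariate Beta values via ${\rm Beta}({\bf b})=\prod_j\Gamma(b_j)/\Gamma(\sum_j b_j)$ reproduces exactly the Gamma-quotient formula obtained for the LHS.

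This is essentially a reinterpretation of the classical P\'{o}lya urn scheme with real (rather than integer) initial weights $p_{0,r}+\beta_r$, so no serious obstacle arises. The only mild point requiring care is that $\beta_r$ need not be an integer; however, the hypothesis $p_{0,r}\geq B>|\beta_r|$ guarantees $p_{0,r}+\beta_r>0$, so the Dirichlet density, the multivariate Beta function, and all Gamma evaluations used above are well-defined and positive.
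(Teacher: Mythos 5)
Your proof is correct and follows essentially the same route as the paper: expand the joint probability as a telescoping product of conditional transition probabilities, regroup numerators and denominators into Gamma quotients (equivalently, a ratio of multivariate Beta values), and match this with the Dirichlet-integral evaluation of the right-hand side. The paper presents this as a chain of equalities transforming the left side into the right, but the computation is identical.
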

	Lemma \ref{lem:the joint mass function} tells us that, the joint mass  function of these ${\bf Y}_i$ only depends on the number of ${\bf e}_1,\cdots,{\bf e}_k$ among ${\bf Y}_1,\cdots,{\bf Y}_n$ but not on the order of ${\bf Y}_i$.
	\begin{proof}
		We  write ${\bf y}_i:=(y_{i,1},\cdots,y_{i,k})$ with $y_{i,r}\in\{0,1\}$ and $y_{i,1}+\cdots+y_{i,k}=1$, then the conditional probability formula gives
		$$
		\begin{aligned}
			\mathbb{P}\big({\bf Y}_1={\bf y}_1,\cdots,{\bf Y}_n={\bf y}_n\big)&=\mathbb{P}\big({\bf Y}_1={\bf y}_1\big)\mathbb{P}\big({\bf Y}_2={\bf y}_2\ |\ {\bf Y}_1={\bf y}_1\big)\cdots\\
			& \mathbb{P}\big({\bf Y}_n={\bf y}_n\ |\ {\bf Y}_{n-1}={\bf y}_{n-1},\cdots, {\bf Y}_1={\bf y}_1\big).
		\end{aligned}
		$$
		Hence by the definition of ${\bf Y}_i$, we obtain
		\begin{align*}
			&\mathbb{P}\big({\bf Y}_1
			={\bf y}_1,\cdots,{\bf Y}_n={\bf y}_n\big)\\
			&\qquad=\prod_{r=1}^k \Big(\frac{p_{0,r}+\beta_r}{s({\bf p}_0)}\Big)^{y_{1,r}}\prod_{r=1}^k \Big(\frac{p_{0,r}+\beta_r+y_{1,r}}{s({\bf p}_0)+1}\Big)^{y_{2,r}}\cdots \prod_{r=1}^k \Big(\frac{p_{0,r}+\beta_r+\sum_{1\leq i\leq n-1}y_{i,r}}{s({\bf p}_0)+n-1}\Big)^{y_{n,r}}.	
		\end{align*}
		Then we infer that
		$$
		\mathbb{P}\big({\bf Y}_1={\bf y}_1,\cdots,{\bf Y}_n={\bf y}_n\big)=\frac{\prod_{j=0}^{t_n^{(1)}-1}(p_{0,1}+\beta_1+j)\cdots\prod_{j=0}^{t_n^{(k)}-1}(p_{0,k}+\beta_k+j)}{\prod_{j=0}^{n-1}\big(s({\bf p}_0)+j\big)},
		$$
		where $t_n^{(r)}$ ($1\leq r\leq k$) is the number of  1's in $y_{1,r},\cdots,y_{n,r}$ and is also the number of ${\bf e}_r$ among ${\bf y}_1,\cdots,{\bf y}_n$.

		From the equation $\Gamma(s+1)=s\Gamma(s)$ and the fact that $\beta_1+\cdots+\beta_k=0$, we represent the above joint mass function as
		$$
		\mathbb{P}\big({\bf Y}_1={\bf y}_1,\cdots,{\bf Y}_n={\bf y}_n\big)=\frac{\Gamma\big(p_{0,1}+\beta_1+t_n^{(1)}\big)\cdots\Gamma\big(p_{0,k}+\beta_k+t_n^{(k)}\big)\Gamma\big(s({\bf p}_0)\big)}{\Gamma\big(s({\bf p}_0)+n\big)\Gamma\big(p_{0,1}+\beta_1\big)\cdots\Gamma\big(p_{0,k}+\beta_k\big)}.
		$$
		Using formula \eqref{eq:Dirichlet integral (1)} and \eqref{eq: the def of beta}, we deduce 
		$$
		\mathbb{P}\big({\bf Y}_1={\bf y}_1,\cdots,{\bf Y}_n={\bf y}_n\big)=\frac{{\rm{Beta}}\big({\bf p}_0+{\boldsymbol{\beta}}+(t_n^{(1)},\cdots,t_n^{(k)})\big)}{{\rm{Beta}}\big({\bf p}_0+{\boldsymbol{\beta}}\big)}.
		$$
		We get immediately from the definition of Dirichlet distribution and \eqref{eq: the def of beta} that
		$$
		\mathbb{P}\big({\bf Y}_1={\bf y}_1,\cdots,{\bf Y}_n={\bf y}_n\big)=\int_{\rm{A}}\alpha_1^{t_n^{(1)}}\cdots\alpha_{k-1}^{t_n^{(k-1)}}\Big(1-\sum_{r=1}^{k-1}\alpha_r\Big)^{t_n^{(k)}}{\rm{Di}}_{k-1}({\boldsymbol{\alpha}}|{\bf p}_0+{\boldsymbol{\beta}})d\alpha_1\cdots d\alpha_{k-1},
		$$
		where ${\rm{A}}$ is defined in \eqref{eq:the def of area A}, which is our desired result.
	\end{proof}
	\begin{remark}
		By the definition of perturbed P\'{o}lya's walk,
		we see that the $n$th step ${\bf p}_n={\bf p}_0+\sum_{i=1}^n{\bf Y}_i$, this implies ${\bf p}_n$ is of the form ${\bf p}_n={\bf p}_0+(u_1,\cdots,u_k)$ for some  $0\leq u_1,\cdots,u_k\leq n$ with $u_1+\cdots+u_k=n$, where $u_r$ denotes the number of ${\bf e}_r$ among ${\bf Y}_1,\cdots,{\bf Y}_n$, hence
		\begin{equation}
			\nonumber\mathbb{P}\big({\bf p}_n={\bf p}_0+(u_1,\cdots,u_k)\big)=\mathbb{P}\Big(\sum_{i=1}^n{\bf Y}_i=(u_1,\cdots,u_k)\Big)
			=\binom{n}{u_1,\cdots,u_k}\mathbb{P}\big({\bf Y}_1,\cdots,{\bf Y}_n\big).
		\end{equation}
		Then  Lemma \ref{lem:the joint mass function} ensures that
		\begin{align}\label{eq:p_n=p_0+(u_1,u_k)}
			\mathbb{P}\big({\bf p}_n={\bf p}_0+\big(u_1,\cdots,u_k)\big)
			=\int_{\rm{A}}P_{n,{\bf u},{\boldsymbol{\alpha}}}{\rm{Di}}_{k-1}({\boldsymbol{\alpha}}|{\bf p}_0+{\boldsymbol{\beta}})d\alpha_1\cdots d\alpha_{k-1},
		\end{align}
		where ${\bf u}=(u_1,\cdots,u_k)$ and $P_{n,{\bf u},{\boldsymbol{\alpha}}}$ is defined by \eqref{eq:def of P_{n,u}}. Similarly, for $m>n\geq 1$ and some $0\leq v_1,\cdots,v_k\leq m-n$ with $v_1+\cdots+v_k=m-n$, we have
		$$
		\begin{aligned}
			\nonumber\mathbb{P}\big({\bf p}_n&={\bf p}_0+(u_1,\cdots,u_k),\ {\bf p}_m={\bf p}_n+(v_1,\cdots,v_k)\big)\\
			\nonumber&=\mathbb{P}\Big(\sum_{i=1}^n{\bf Y}_i=(u_1,\cdots,u_k),\ \sum_{i=n+1}^m{\bf Y}_i=(v_1,\cdots,v_k)\Big)\\
			&=\binom{n}{u_1,\cdots,u_k}\binom{m-n}{v_1,\cdots,v_k}\mathbb{P}\big({\bf Y}_1,\cdots,{\bf Y}_m\big),
		\end{aligned}
		$$
		which shows that, for  ${\bf v}=(v_1,\cdots,v_k)$,
		\begin{align}\label{eq:p_m=p_n+v}
			\nonumber\mathbb{P}\big({\bf p}_n&={\bf p}_0+(u_1,\cdots,u_k),\ {\bf p}_m={\bf p}_n+(v_1,\cdots,v_k)\big)\\
			&=\int_{\rm{A}}P_{n,{\bf u},{\boldsymbol{\alpha}}}P_{m-n,{\bf v},{\boldsymbol{\alpha}}}{\rm{Di}}_{k-1}({\boldsymbol{\alpha}}|{\bf p}_0+{\boldsymbol{\beta}})d\alpha_1\cdots d\alpha_{k-1}.
		\end{align}
	\end{remark}

	With the help of Lemma \ref{lem:the joint mass function}, we have the following results for the expectation and variance of $\overline{R}_k({N})$. The proofs are given in Section \ref{sec:proof of E(s) and V(s)}.
	
	\begin{proposition}\label{pro:E(R)} 
		Let  $k\geq 2$ be an integer. Suppose
		${\bf p}_0=(p_{0,1},\cdots,p_{0,k})$ is the starting point of the perturbed walk with all $p_{0,r}\geq 2$, and the perturbation $\boldsymbol{\beta}=(\beta_1,\cdots,\beta_k)$ satisfying all $|\beta_r|<1$. Then for any $\varepsilon>0$, we have, as $N\rightarrow\infty$,
		$$
		\mathbb{E}\big(\overline{R}_k({N})\big)=\frac{1}{\zeta(k)}+O_{k,\varepsilon,{\bf p}_0}(N^{-1/2+\varepsilon}).
		$$
	\end{proposition}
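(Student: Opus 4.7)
The plan is to compute $\mathbb{E}(\overline{R}_k(N))=\frac{1}{N}\sum_{i=1}^{N}\mathbb{P}(V_i=1)$ by combining Möbius inversion with the Dirichlet integral representation of the law of $\mathbf{p}_i$ from Lemma \ref{lem:the joint mass function}, and then running a roots-of-unity filter to isolate the main term. By Lemma \ref{lem:the critia of visibility} and the identity \eqref{eq:the mobius inversion formula}, $\mathbb{P}(V_i=1)=\sum_{d\geq 1}\mu(d)\,\mathbb{P}(d\mid p_{i,r}\text{ for all }1\le r\le k)$. Applying \eqref{eq:p_n=p_0+(u_1,u_k)} rewrites the inner probability as $\int_{\mathrm{A}}S_{d}(i,\boldsymbol{\alpha})\,\mathrm{Di}_{k-1}(\boldsymbol{\alpha}\mid\mathbf{p}_0+\boldsymbol{\beta})\,d\alpha_1\cdots d\alpha_{k-1}$, where $S_{d}(i,\boldsymbol{\alpha})=\sum_{\mathbf{u}}P_{i,\mathbf{u},\boldsymbol{\alpha}}$ is the restricted multinomial sum, indexed by $\mathbf{u}=(u_1,\ldots,u_k)$ with $\sum_r u_r=i$ and $d\mid p_{0,r}+u_r$ for each $r$.

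Next, I apply the discrete Fourier expansion $\mathbf{1}[d\mid m]=d^{-1}\sum_{j=0}^{d-1}e^{2\pi\sqrt{-1}\,jm/d}$ to each of the $k$ divisibility conditions and invoke the multinomial theorem, which transforms $S_d$ into an exponential sum
\begin{equation*}
S_{d}(i,\boldsymbol{\alpha})=\frac{1}{d^{k}}\sum_{\mathbf{j}\in\{0,\ldots,d-1\}^{k}}e^{2\pi\sqrt{-1}\,\mathbf{j}\cdot\mathbf{p}_0/d}\left(\sum_{r=1}^{k}\alpha_r\,e^{2\pi\sqrt{-1}\,j_r/d}\right)^{i}.
\end{equation*}
The $d$ diagonal multi-indices $\mathbf{j}=j\cdot(1,\ldots,1)$, $0\le j\le d-1$, collapse to $\mathbf{1}[d\mid s(\mathbf{p}_0)+i]/d^{k-1}$, which is independent of $\boldsymbol{\alpha}$ and hence passes through the Dirichlet integration unchanged. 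Summing this main contribution over $1\le i\le N$ yields $N/d^{k}+O(d^{-(k-1)})$; weighting by $\mu(d)$ and summing over $d$, the leading piece produces $N\sum_d\mu(d)/d^{k}=N/\zeta(k)$ via \eqref{eq:the mobius and zeta}, while the $O(d^{-(k-1)})$ remainders, truncated at some $d\le D$, contribute at most $O(\log D)$ (for $k=2$) or $O(1)$ (for $k\ge 3$).

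For the non-diagonal Fourier contributions I use the contraction
\begin{equation*}
\left|\sum_{r=1}^{k}\alpha_r\,e^{2\pi\sqrt{-1}\,j_r/d}\right|^{2}\le 1-c\,\alpha_{r_1}\alpha_{r_2}/d^{2}
\end{equation*}
valid for some pair $r_1\ne r_2$ with $j_{r_1}\not\equiv j_{r_2}\pmod d$ and an absolute constant $c>0$, whence the $i$-th power is bounded by $\exp(-c\alpha_{r_1}\alpha_{r_2}\,i/(2d^{2}))$. Integrating this against the Dirichlet density and using the integrability bounds \eqref{eq: bound for Dir for -1/2}--\eqref{eq: bound for Dir for -1} (together with $\tau(d)\ll_\varepsilon d^{\varepsilon}$ and Lemmas \ref{lem: sum 1/n}--\ref{lem:the sum over i<j} when summing over $i$), the contribution of a single non-diagonal $\mathbf{j}$ can be bounded by a negative power of $i$ with a polynomial factor in $d$; summing over the at most $d^{k}$ non-diagonal multi-indices, over $d\le D:=N^{1/2}$, and using a trivial bound for the tail $d>D$ (where $\gcd(\mathbf{p}_i)\le p_{0,1}+i$ forces the contribution to be controlled), the aggregate error in $\sum_{i=1}^{N}\mathbb{P}(V_i=1)$ stays at $O_{k,\varepsilon,\mathbf{p}_0}(N^{1/2+\varepsilon})$, hence $O(N^{-1/2+\varepsilon})$ after dividing by $N$.

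The main obstacle is the error bookkeeping in the last paragraph: the Dirichlet density $\mathrm{Di}_{k-1}$ has integrable singularities precisely on the boundary $\alpha_{r_1}\alpha_{r_2}=0$ where the contraction factor degenerates, so obtaining a clean uniform-in-$(d,\boldsymbol{\alpha})$ control on $\int_{\mathrm{A}}|\sum_r\alpha_r e^{2\pi\sqrt{-1}\,j_r/d}|^{i}\mathrm{Di}_{k-1}\,d\boldsymbol{\alpha}$ requires carefully splitting the integration region according to whether $\alpha_{r_1}\alpha_{r_2}$ is small or large compared to $d^2/i$, and then verifying that the accumulated cross-terms and truncation errors genuinely assemble into $O(N^{-1/2+\varepsilon})$ rather than something worse.
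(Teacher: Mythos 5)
Your skeleton is the same as the paper's: M\"obius inversion for the gcd condition, the Dirichlet-integral representation of the law of $\mathbf{p}_i$, a roots-of-unity filter whose diagonal frequencies produce $N/\zeta(k)$. The gap is in the error bookkeeping for the non-diagonal frequencies, and it is not merely the $\boldsymbol{\alpha}$-boundary issue you flag (that part is fine: $e^{-x}\le x^{-1/2}$ turns the exponential into $(\alpha_{r_1}\alpha_{r_2})^{-1/2}\,d/\sqrt{i}$, which is integrable against $\mathrm{Di}_{k-1}$ by \eqref{eq: bound for Dir for -1/2} since $p_{0,r}+\beta_r>1$). The real problem is the summation over $\mathbf{j}$ and over $d$. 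Your contraction $|\sum_r\alpha_r e(j_r/d)|^2\le 1-c\,\alpha_{r_1}\alpha_{r_2}/d^2$ is the worst case, attained by the $\sim d$ ``near-diagonal'' multi-indices; taking this bound uniformly over all $d^k$ non-diagonal $\mathbf{j}$ and cancelling the $d^{-k}$ prefactor leaves a contribution of order $d/\sqrt{i}$ \emph{per value of} $d$. Summing over all $d\le D=N^{1/2}$ gives $D^2/\sqrt{i}=N/\sqrt{i}$ per step, hence $N^{3/2}$ after summing over $i\le N$ and $N^{1/2}$ after dividing by $N$ --- the error term diverges rather than decays. Two repairs are needed, and together they reproduce the paper's argument. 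First, restrict the M\"obius sum to $d\mid s(\mathbf{p}_0)+i$ (divisibility of all coordinates forces divisibility of their sum), so only $\tau(s(\mathbf{p}_0)+i)\ll_\varepsilon i^{\varepsilon}$ values of $d$ occur per step; truncating at $D=N^{1/2}$ and summing over all $d\le D$ forfeits the cancellation among non-diagonal terms that makes the non-divisor $d$'s vanish. Second, you must obtain a bound on the total non-diagonal contribution that is \emph{uniform in $d$}, which requires averaging over the frequencies rather than using the worst case: most $\mathbf{j}$ have some pairwise gap $|j_{r_1}-j_{r_2}|\asymp hd$ bounded away from $0$ and contract like $e^{-c\alpha\alpha' h^2 i}$, and the average $\frac1d\sum_{h<d/2}\cos^{l}(\pi h/d)\ll 1/\sqrt{l}$ is exactly what removes the factor $d$. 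This is the content of the paper's Lemmas \ref{lem:cos x<<}--\ref{lem:estimate of beta_{t_i}}, packaged as the key Lemma \ref{lem:sum over u}, whose error term $\sum_t(\alpha_t\alpha_k)^{-1/2}O_k(\log n/\sqrt n)$ carries no $d$-dependence.

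With those two fixes your argument closes and is essentially identical to the paper's proof; the remaining steps (the divisor bound $\tau\ll_\varepsilon n^\varepsilon$, the evaluation $\sum_{n\le N}\sum_{d\mid n+s(\mathbf{p}_0)}\mu(d)d^{1-k}=N/\zeta(k)+O(N^\varepsilon)$) match your main-term computation. But as written, the claimed aggregate error $O_{k,\varepsilon,\mathbf{p}_0}(N^{1/2+\varepsilon})$ for $\sum_{i\le N}\mathbb{P}(V_i=1)$ does not follow from the estimates you state.
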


	\begin{proposition}\label{pro:V(R)=} 
			Let  $k\geq 2$ be an integer. Suppose
		${\bf p}_0=(p_{0,1},\cdots,p_{0,k})$ is the starting point of the perturbed walk with all $p_{0,r}\geq 2$, and the perturbation $\boldsymbol{\beta}=(\beta_1,\cdots,\beta_k)$ satisfying all $|\beta_r|<1$. Then for any $\varepsilon>0$, we have, as $N\rightarrow\infty$,
		$$
		\mathbb{V}\big(\overline{R}_k({N})\big)=O_{k,\varepsilon,{\bf p}_0}(N^{-1/2+\varepsilon}).
		$$
	\end{proposition}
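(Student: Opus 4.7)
The plan is to estimate the variance via its covariance expansion:
\[
\mathbb{V}\bigl(\overline{R}_k(N)\bigr) = \frac{1}{N^2}\sum_{i=1}^N \mathbb{V}(V_i) + \frac{2}{N^2}\sum_{1\leq i<j\leq N}\mathrm{Cov}(V_i,V_j).
\]
Since each $V_i$ is Bernoulli, the diagonal sum contributes only $O(1/N)$, and the whole task reduces to showing $\sum_{i<j}|\mathrm{Cov}(V_i,V_j)|\ll_{k,\varepsilon,{\bf p}_0} N^{3/2+\varepsilon}$. To compute each covariance I would combine Lemma \ref{lem:the critia of visibility} with the M\"obius identity \eqref{eq:the mobius inversion formula} to write $V_n=\sum_{d\geq 1}\mu(d)\Id[d\mid p_{n,r}\ \forall r]$, giving
\[
\mathrm{Cov}(V_i,V_j) = \sum_{d_1,d_2\geq 1}\mu(d_1)\mu(d_2)\bigl[\mathbb{P}(A_i(d_1)\cap A_j(d_2)) - \mathbb{P}(A_i(d_1))\mathbb{P}(A_j(d_2))\bigr],
\]
where $A_n(d):=\{d\mid p_{n,r},\,1\leq r\leq k\}$.

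Next I would use formulas \eqref{eq:p_n=p_0+(u_1,u_k)} and \eqref{eq:p_m=p_n+v} to express both probabilities as Dirichlet integrals over the simplex ${\rm A}$, applying the roots-of-unity filter $\Id[d\mid m]=d^{-1}\sum_{h\!\!\pmod d}e^{2\pi i hm/d}$ followed by the multinomial theorem to obtain the closed form
\[
\mathbb{P}_{\boldsymbol{\alpha}}\bigl(A_n(d)\bigr)=\frac{1}{d^k}\sum_{{\bf h}\in(\ZZ/d\ZZ)^k}e^{2\pi i{\bf h}\cdot{\bf p}_0/d}\Big(\sum_{r=1}^k\alpha^*_r e^{2\pi i h_r/d}\Big)^n,
\]
with $\alpha^*_r=\alpha_r$ for $r<k$ and $\alpha^*_k=1-\sum_{r<k}\alpha_r$. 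The vanishing mode ${\bf h}={\bf 0}$ contributes the main term $d^{-k}$ in both $\mathbb{P}(A_i(d_1)\cap A_j(d_2))$ and $\mathbb{P}(A_i(d_1))\mathbb{P}(A_j(d_2))$, and these cancel in the covariance. For the joint event, the perturbed P\'olya walk conditioned on $\boldsymbol{\alpha}$ has independent multinomial increments, so the joint Dirichlet integrand factorizes and produces an extra factor $\bigl(\sum_r\alpha^*_r e^{2\pi i h'_r/d_2}\bigr)^{j-i}$ carrying the increment ${\bf p}_j-{\bf p}_i$; this factor is the source of the decay in the gap $j-i$.

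The hard part will be controlling these oscillatory Dirichlet integrals uniformly in $d_1,d_2$ and in $j-i$. For ${\bf h}\not\equiv{\bf 0}\!\!\pmod d$ the base $\sum_r\alpha^*_r e^{2\pi i h_r/d}$ has modulus strictly less than $1$, and the required decay degenerates only near the vertices of ${\rm A}$; the hypothesis $p_{0,r}\geq 2$ and $|\beta_r|<1$ places us in the regime $p_{0,r}+\beta_r>1$ where the negative-moment estimates \eqref{eq: bound for Dir for -1/2} and \eqref{eq: bound for Dir for -1} are valid, which is exactly what is needed to integrate these boundary singularities. Combining these estimates with the divisor bound $\tau(d)\ll_\varepsilon d^\varepsilon$ and the cancellation among the $\mu(d_i)$ coefficients should yield $|\mathrm{Cov}(V_i,V_j)|\ll_{k,\varepsilon,{\bf p}_0}(j-i)^{-1/2+\varepsilon}$; summing over $1\leq i<j\leq N$ via Lemma \ref{lem:the sum over i<j} then gives the claimed bound $O(N^{3/2+\varepsilon})$, dividing by $N^2$ finishes the proof.
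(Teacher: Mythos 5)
Your proposal follows essentially the same route as the paper: your covariance expansion is just the paper's decomposition $\mathbb{E}\big(\overline{R}_k(N)^2\big)-\mathbb{E}\big(\overline{R}_k(N)\big)^2$ into diagonal and off-diagonal terms; the M\"obius inversion plus roots-of-unity filter applied inside the Dirichlet integral is exactly the content of the paper's key Lemma \ref{lem:sum over u}; the factorization of the joint integrand over the gap $j-i$ is the paper's formula \eqref{eq:p_m=p_n+v}; the negative Dirichlet moments \eqref{eq: bound for Dir for -1/2} and \eqref{eq: bound for Dir for -1} play the same role under the hypothesis $p_{0,r}+\beta_r>1$; and the final summation over $i<j$ via Lemma \ref{lem:the sum over i<j} is identical.

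There is, however, one concrete error in the execution. In your closed form for $\mathbb{P}_{\boldsymbol{\alpha}}\big(A_n(d)\big)$ the main term is not the single mode ${\bf h}={\bf 0}$: every diagonal mode ${\bf h}=h(1,\cdots,1)$ satisfies $\big|\sum_r\alpha_r^*e^{2\pi ih/d}\big|=1$, hence contributes at full strength $d^{-k}$ with no decay in $n$, and these $d$ modes sum to $d^{-(k-1)}\,\Id[\,d\mid n+s({\bf p}_0)\,]$. If you declare only ${\bf h}={\bf 0}$ to be the main term, then the claimed decay of the remaining "error" modes in $n$ (or in $j-i$) is simply false, and the asserted cancellation between $\mathbb{P}\big(A_i(d_1)\cap A_j(d_2)\big)$ and $\mathbb{P}\big(A_i(d_1)\big)\mathbb{P}\big(A_j(d_2)\big)$ does not go through. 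The correct main term is the paper's $\sum_{d\mid n+s({\bf p}_0)}\mu(d)d^{-(k-1)}$. Relatedly, you should observe that $A_n(d)=\emptyset$ unless $d\mid n+s({\bf p}_0)$ and restrict the $d$-sum to those $\tau\big(n+s({\bf p}_0)\big)\ll_{\varepsilon,{\bf p}_0} n^{\varepsilon}$ divisors \emph{before} filtering: your per-$d$ error bound is uniform in $d$ but does not decay in $d$, so the unrestricted double sum $\sum_{d_1,d_2\geq 1}$ of error terms would not be acceptable otherwise. A smaller point: the target $|\mathrm{Cov}(V_i,V_j)|\ll(j-i)^{-1/2+\varepsilon}$ is stronger than this method delivers, since modes with ${\bf h}'$ diagonal but ${\bf h}$ non-diagonal decay only like $i^{-1/2+\varepsilon}$ rather than in the gap; the paper settles for $O_{k,\varepsilon,{\bf p}_0}\big(n^{\varepsilon}m^{\varepsilon}(m-n)^{-1/2+\varepsilon}+n^{-1/2+\varepsilon}m^{\varepsilon}\big)$, which still sums to $O(N^{3/2+\varepsilon})$ by Lemma \ref{lem:the sum over i<j}, so this does not affect the conclusion.
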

	The rest of this subsection is devoted to prove Theorem \ref{thm:R(S)=} by the above two propositions.  
	\begin{proof}[Proof of Theorem {\rm \ref{thm:R(S)=}}]
		For the initial point ${\bf p}_0=(p_{0,1},\cdots,p_{0,k})$ satisfying $p_{0,r}\geq 2$ for all $1\leq r\leq k$, the result holds by Propositions \ref{pro:E(R)} and \ref{pro:V(R)=}, and Lemma \ref{lem:lim R_n=u}.
		
		If there exists some $t\in\{1,\cdots,k\}$ such that $p_{0,t}=1$, without loss of generality, we suppose $p_{0,k}=1$ and $p_{0,r}\geq 1$ for any $1\leq r\leq k-1$. Next we want to prove that the probability of ${\bf p}_n=(p_{n,1},\cdots,p_{n,k-1},1)$ tends to 0 as $n$ approaches $\infty$. Note that ${\bf p}_n$ can be written as ${\bf p}_n={\bf p}_0+\big(u_1,\cdots,u_k)$ for some $0\leq u_1,\cdots,u_{k}\leq n$ and $u_1+\cdots+u_{k}=n$, hence this is equivalent to prove $\mathbb{P}\big({\bf p}_n={\bf p}_0+\big(u_1,\cdots,u_{k-1},0)\big)=0$ as $n\rightarrow\infty$.  
		
		By \eqref{eq:p_n=p_0+(u_1,u_k)}, we have
		$$
		\mathbb{P}\big({\bf p}_n={\bf p}_0+\big(u_1,\cdots,u_{k-1},0)\big)
		=\frac{n!}{u_1!\cdots u_{k-1}!}\int_{\rm{A}}\alpha_1^{u_1}\cdots\alpha_{k-1}^{u_{k-1}}{\rm{Di}}_{k-1}({\boldsymbol{\alpha}}|{\bf p}_0+{\boldsymbol{\beta}})d\alpha_1\cdots d\alpha_{k-1}
		$$
		for some $0\leq u_1,\cdots,u_{k-1}\leq n$ and $u_1+\cdots+u_{k-1}=n$. The Dirichlet integral formula \eqref{eq:Dirichlet integral (1)} gives us that
		$$
		\begin{aligned}
			\mathbb{P}\big({\bf p}_n={\bf p}_0+\big(u_1,\cdots,u_{k-1},0)\big)
			=\frac{n!}{u_1!\cdots u_{k-1}!}\frac{\prod_{r=1}^{k-1}\Gamma(u _{r}+p_{0,r}+\beta_{r})\Gamma(\beta_k+1)}{\Gamma(n+1+p_{0,1}+\cdots+p_{0,k-1})}\frac{1}{\rm{Beta}({\bf p}_0+{\boldsymbol{\beta}})}.
		\end{aligned}
		$$
		Using the estimate
		\begin{equation}\label{eq:the bound of gamma (s+u)}
		\frac{\Gamma(s+u)}{\Gamma(s)}\ll(|s|+1)^{\kappa}e^{\frac{\pi}{2}|u|}
		\end{equation}
		with $|\arg s|\leq \pi-\varepsilon$, $\Re (s)=\sigma>0$ and $\Re (u)=\kappa>-\sigma$, we obtain
		$$
		\frac{\Gamma(u _{r}+p_{0,r}+\beta_{r})}{u_r!}=\frac{\Gamma(u _{r}+1+p_{0,r}+\beta_{r}-1)}{\Gamma(u_r+1)}\ll (u_r+2)^{p_{0,r}+\beta_{r}-1}
		$$
		for each $u_r\geq 0$, and for integer $n\geq 1$, we have
		$$
		\frac{n!}{\Gamma(n+1+p_{0,1}+\cdots+p_{0,k-1})}\ll(n+2+p_{0,1}+\cdots+p_{0,k-1})^{-(p_{0,1}+\cdots+p_{0,k-1})}.
		$$
		Combining all above gives, we deduce
		$$
		\begin{aligned}
			\mathbb{P}\big({\bf p}_n={\bf p}_0+\big(u_1,\cdots,u_{k-1},0)\big)\ll\frac{\prod_{r=1}^{k-1}(u_r+2)^{p_{0,r}+\beta_{r}}}{(n+2)^{p_{0,1}+\cdots+p_{0,k-1}}}\frac{1}{\prod_{r=1}^{k-1}(u_r+2)}
			\ll \frac{1}{(n+2)^{\beta_k}}\frac{1}{\prod_{r=1}^{k-1}(u_r+2)},
		\end{aligned}
		$$
		where the fact $\beta_1+\cdots+\beta_k=0$ is used for the last estimate.
		
		Since $u_1+\cdots+u_{k-1}=n$, there exists some $r_0\in\{1,\cdots,k-1\}$ satisfies $n/(k-1)\leq u_{r_0}\leq n$, then we have
		$$
		\mathbb{P}\big({\bf p}_n={\bf p}_0+\big(u_1,\cdots,u_{k-1},0)\big)\ll \frac{1}{(n+2)^{\beta_k}}\frac{1}{(u_{r_0}+2)}\ll_k \frac{1}{(n+2)^{1+\beta_k}}
		$$
		It follows from our assumption  $|\beta_r|<1$ for all $r$ that
		$$
		\mathbb{P}\big({\bf p}_n={\bf p}_0+\big(u_1,\cdots,u_{k-1},0)\big)\rightarrow0,\ \ \ n\rightarrow\infty,
		$$
		which is what we want to prove.
		
		From this we conclude that the walker will reach a new  point ${\bf p}_0^{\prime}$ satisfying $p_{0,k}^{\prime}\geq 2$ with probability one. At this moment, if all $p_{0,r}^{\prime}\geq 2$, then Theorem \ref{thm:R(S)=} holds; otherwise, we repeat the above process,  then the walker eventually enters the region $\{(m_1,\cdots,m_k):\ m_r\geq 2,\ \forall 1\leq r\leq k\}$ with probability 1. This gives Theorem \ref{thm:R(S)=}.
	\end{proof}

	\section{Compute the expectation and variance of \(\overline{R}_{k}(N)\)}\label{sec:proof of E(s) and V(s)}
	Our goal in this section is to prove  Propositions \ref{pro:E(R)} and \ref{pro:V(R)=}. To get started, we introduce the following useful lemma which is a crucial ingredient in this paper.
	\begin{lemma}\label{lem:sum over u}
		Let $n\geq 1,\ k\geq 2$ be integers and $c_1,\cdots,c_{k-1}\in\mathbb{Z}$. Assume ${\boldsymbol{\alpha}}:=(\alpha_{1},\cdots,\alpha_{k-1})$ satisfying $0<\alpha_{1},\cdots,\alpha_{k-1}<1$ and $\alpha_{1}+\cdots+\alpha_{k-1}<1$. Then for any integer $d\geq 1$, we have
		$$
		\sum_{\substack{0\leq u_1,\cdots,u_k\leq n\\u_1+\cdots+u_k= n\\ u_r\equiv c_r(\bmod d),\\ \forall 1\leq r\leq k-1}}\binom{n}{u_1,\cdots,u_k}\alpha_1^{u_1}\cdots\alpha_{k-1}^{u_{k-1}}\alpha_{k}^{u_k}=\frac{1}{d^{k-1}}+\sum_{1\leq t\leq k-1}\frac{1}{\sqrt{\alpha_t\alpha_{k}}}O_{k}\Big(\frac{\log n}{\sqrt{n}}\Big)
		$$
		as $n\rightarrow\infty$, where $\alpha_{k}:=1-\sum_{r=1}^{k-1}\alpha_r$.
	\end{lemma}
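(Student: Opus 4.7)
The plan is to apply a roots of unity filter to the congruence conditions and then collapse the resulting sum using the multinomial theorem. Set $\omega = e^{2\pi i/d}$ and use the identity $\mathbf{1}_{u_r \equiv c_r\,(\bmod\,d)} = \frac{1}{d}\sum_{j_r=0}^{d-1}\omega^{j_r(u_r-c_r)}$ for each $1\le r\le k-1$. Substituting into the target sum and exchanging the order of summation rewrites the left-hand side as
$$\frac{1}{d^{k-1}}\sum_{\vec j\in\{0,\ldots,d-1\}^{k-1}}\omega^{-\vec j\cdot \vec c}\sum_{\vec u}\binom{n}{u_1,\ldots,u_k}\prod_{r=1}^{k-1}(\alpha_r\omega^{j_r})^{u_r}\alpha_k^{u_k}.$$
By the multinomial theorem the inner sum equals $z_{\vec j}^n$, where $z_{\vec j} := \alpha_k + \sum_{r=1}^{k-1}\alpha_r\omega^{j_r}$. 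The contribution of $\vec j=\vec 0$ is $z_{\vec 0}^n = 1$, producing the expected main term $\frac{1}{d^{k-1}}$ exactly.

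It remains to control $\frac{1}{d^{k-1}}\sum_{\vec j\ne \vec 0}|z_{\vec j}|^n$. A direct expansion (setting $j_k=0$) gives $|z_{\vec j}|^2 = 1 - 4\sum_{1\le r<s\le k}\alpha_r\alpha_s \sin^2(\pi(j_r-j_s)/d)$. Retaining only the pairs involving index $k$ and applying $1-x\le e^{-x}$ yields $|z_{\vec j}|^n \le \prod_{t=1}^{k-1}\exp(-2n\alpha_t\alpha_k \sin^2(\pi j_t/d))$. Combining with the elementary inequality $\sin(\pi x/d)\ge 2\|x\|_d/d$, where $\|x\|_d$ denotes the distance of $x$ to the nearest multiple of $d$, and comparing the $j_t$-sum to a Gaussian integral gives $\sum_{j_t=0}^{d-1}\exp(-2n\alpha_t\alpha_k \sin^2(\pi j_t/d)) \le 1 + O_k(d/\sqrt{n\alpha_t\alpha_k})$, so that
$$\frac{1}{d^{k-1}}\sum_{\vec j\ne \vec 0}|z_{\vec j}|^n \le \frac{1}{d^{k-1}}\left(\prod_{t=1}^{k-1}\left(1 + O_k\!\left(\frac{d}{\sqrt{n\alpha_t\alpha_k}}\right)\right) - 1\right).$$
Expanding this product, the factors of $d$ from the linear terms cancel against $d^{-(k-1)}$ and reproduce the claimed bound $\sum_{t=1}^{k-1}(\alpha_t\alpha_k)^{-1/2}O_k(n^{-1/2})$, while the cross terms are of strictly lower order in $n^{-1/2}$.

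The main obstacle is the regime where $d$ is comparable to or larger than $\sqrt{n\alpha_t\alpha_k}$: the per-coordinate error $d/\sqrt{n\alpha_t\alpha_k}$ is no longer small and the bare product expansion becomes uninformative. In this regime one should replace the Gaussian estimate by the trivial bound $\sum_{j_t=0}^{d-1}\le d$ on some coordinates while retaining the exponential decay on others. The cleanest way to interpolate is to split the $\vec j$-sum dyadically according to $\|j_t\|_d$ and balance the two bounds on each dyadic piece; this is precisely where the mild factor $\log n$ in the stated error term provides a comfortable cushion. Once this balancing is handled, the lemma follows by routine bookkeeping.
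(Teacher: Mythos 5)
Your proposal is correct in its overall architecture and, for the error term, takes a genuinely different route from the paper. Both arguments begin identically: orthogonality of additive characters (your roots-of-unity filter), the multinomial theorem collapsing the inner sum to $z_{\vec j}^{\,n}$, and extraction of the $\vec j=\vec 0$ term as the main term $d^{-(k-1)}$. The divergence is in bounding $\sum_{\vec j\neq\vec 0}|z_{\vec j}|^n$. The paper first reduces to a one-dimensional sum by bounding all but one cosine factor trivially (its Lemma \ref{lem:estimate of beta_{t_i}}), then estimates $\frac{1}{d}\sum_h(1-\delta+\delta\cos(2\pi h/d))^{n/2}$ by a binomial expansion in $l$ combined with $\sum_h\cos^l(\pi h/d)\ll d/\sqrt{l}$ and a split at $l\le m/\log^2 m$ --- this is where its $\log n$ is lost. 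You instead use $1-x\le e^{-x}$ and $\sin(\pi\theta)\ge 2\theta$ to compare each coordinate sum to a Gaussian integral, which is cleaner and, once completed, yields $O(1/\sqrt n)$ with no logarithm at all.

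The one substantive issue is your middle paragraph's claim that the cross terms in the expansion of $\prod_t(1+O_k(d/\sqrt{n\alpha_t\alpha_k}))-1$ are ``of strictly lower order.'' That is true for fixed $\boldsymbol\alpha$ and bounded $d$, but the lemma must be uniform in both: the error is later integrated against the Dirichlet density, so it must have exactly the shape $\sum_t(\alpha_t\alpha_k)^{-1/2}O_k(\cdot)$ with an implied constant depending only on $k$, and $d$ ranges up to $n+s(\mathbf{p}_0)$ in the application. Your final paragraph correctly diagnoses the fix (trivial bound on some coordinates, decay on one), but the dyadic decomposition you propose is unnecessary. Writing $S_t:=\sum_{j_t=0}^{d-1}e^{-2n\alpha_t\alpha_k\sin^2(\pi j_t/d)}$, one has
\begin{equation*}
\prod_{t=1}^{k-1}S_t-1=\sum_{\emptyset\neq T\subseteq\{1,\dots,k-1\}}\prod_{t\in T}(S_t-1),\qquad S_t-1\le\min\Bigl(d,\ \frac{C_k\,d}{\sqrt{n\alpha_t\alpha_k}}\Bigr),
\end{equation*}
and bounding every factor of $\prod_{t\in T}(S_t-1)$ by $d$ except one (say $t_0=\max T$), for which the Gaussian bound is kept, gives $d^{-(k-1)}\prod_{t\in T}(S_t-1)\le C_k\,d^{|T|-k+1}(n\alpha_{t_0}\alpha_k)^{-1/2}\le C_k(n\alpha_{t_0}\alpha_k)^{-1/2}$ since $|T|\le k-1$. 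Summing over the $O_k(1)$ subsets $T$ recovers the claimed error (indeed without the $\log n$). With this one-line replacement for your last paragraph the proof is complete.
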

	
	%We should point out that Lemma \ref{lem:sum over u} generalizes Lemma 3.6 in \cite{FF} to higher dimensional case. But our proof relies more on analytic properties of certain number-theoretic functions, the method of which is different from that in \cite{FF}, even for the case $k=2$.
	
	The proof of Lemma \ref{lem:sum over u} is a little bit complicated, and we give the proof in Section \ref{sec:appendix}. First we use it to prove Propositions \ref{pro:E(R)} and \ref{pro:V(R)=}.

	\subsection{The asymptotic formula for the expectation of $\overline{R}_k({N})$}
	We write Proposition \ref{pro:E(R)} again for the convenience
	of the reader:
	\vspace{0.3cm}	
	
	\noindent{\bf Proposition \ref{pro:E(R)}.}	\textit{Let  $k\geq 2$ be an integer. Suppose
		${\bf p}_0=(p_{0,1},\cdots,p_{0,k})$ is the starting point of the perturbed walk with all $p_{0,r}\geq 2$, and the perturbation $\boldsymbol{\beta}=(\beta_1,\cdots,\beta_k)$ satisfying all $|\beta_r|<1$. Then for any $\varepsilon>0$, we have, as $N\rightarrow\infty$,
		$$
		\mathbb{E}\big(\overline{R}_k({N})\big)=\frac{1}{\zeta(k)}+O_{k,\varepsilon,{\bf p}_0}(N^{-1/2+\varepsilon}).
		$$}

	\begin{proof}
		We recall from \eqref{def:V=} and \eqref{def:R=} that
		\begin{equation}\label{eq:E(S)=E(X)_n}
			\mathbb{E}\big(\overline{R}_k({N})\big)=\frac{1}{N}\sum_{1\leq n\leq N}\mathbb{E}({V}_n)=\frac{1}{N}\sum_{1\leq n\leq N}\mathbb{P}({\bf p}_n\ \rm{is}\ \rm{visible}).
		\end{equation}
		Then by \eqref{eq:p_n=p_0+(u_1,u_k)}, we obtain
		$$
		\begin{aligned}
			\mathbb{E}({V}_n)=\sum_{\substack{ u_{1},\cdots,u_{k}}}\mathbb{P}\big({\bf p}_n={\bf p}_0+(u_1,\cdots,u_k)\big)=\int_{\rm{A}}\sum_{\substack{ u_{1},\cdots,u_{k}}}P_{n,{\bf u},{\boldsymbol{\alpha}}}{\rm{Di}}_{k-1}({\boldsymbol{\alpha}}|{\bf p}_0+{\boldsymbol{\beta}})d\alpha_1\cdots d\alpha_{k-1}
		\end{aligned}
		$$
		with ${\bf u}=(u_1,\cdots,u_k)$, here, for notational simplicity, we write
		$$
		\sum_{\substack{ u_{1},\cdots,u_{k}}}:=\sum_{\substack{0\leq u_{1},\cdots,u_{k}\leq n\\u_{1}+\cdots+u_{k}=n\\  (u_1+p_{0,1},\cdots,u_k+p_{0,k})\ {\rm{is}}\ {\rm{visible}}}}.
		$$
		It is readily to seen from Lemma \ref{lem:the critia of visibility} that
		$$
		\sum_{\substack{ u_{1},\cdots,u_{k}}}P_{n,{\bf u},{\boldsymbol{\alpha}}}=\sum_{\substack{0\leq u_{1},\cdots,u_{k}\leq n\\u_{1}+\cdots+u_{k}=n\\ \gcd(u_1+p_{0,1},\cdots,u_k+p_{0,k})=1}}P_{n,{\bf u},{\boldsymbol{\alpha}}}=\sum_{\substack{0\leq u_{1},\cdots,u_{k}\leq n\\u_{1}+\cdots+u_{k}=n\\ \gcd(u_1+p_{0,1},\cdots,u_{k-1}+p_{0,k-1},n+s({\bf p}_0))=1}}P_{n,{\bf u},{\boldsymbol{\alpha}}},
		$$
		where $s({\bf v}):=v_1+\cdots+v_k$ for vector ${\bf v}=(v_1,\cdots,v_k)\in\mathbb{Z}^k$.
		Hence by the well-known formula \eqref{eq:the mobius inversion formula}, we may write
		$$
		\begin{aligned}
			\sum_{\substack{ u_{1},\cdots,u_{k}}}P_{n,{\bf u},{\boldsymbol{\alpha}}}=\sum_{d\mid n+s({\bf p}_0)}\mu(d)\sum_{\substack{0\leq u_{1},\cdots,u_{k}\leq n\\u_{1}+\cdots+u_{k}=n\\d\mid u_r+p_{0,r},\forall 1\leq r\leq k-1}}P_{n,{\bf u},{\boldsymbol{\alpha}}}.
		\end{aligned}
		$$
		We apply Lemma \ref{lem:sum over u} for the inner sum and use the fact $|\mu(d)|\leq 1$, there holds
		\begin{equation}\label{eq:E(Y_n=)}
			\sum_{\substack{ u_{1},\cdots,u_{k}}}P_{n,{\bf u},{\boldsymbol{\alpha}}}=\sum_{{ d\mid n+s({\bf p}_0)}}\frac{\mu(d)}{d^{k-1}}+\sum_{1\leq t\leq k-1}\frac{1}{\sqrt{\alpha_t\alpha_k}}O_k\big(n^{-1/2+\varepsilon}\tau\big(n+s({\bf p}_0)\big)\big),
		\end{equation}
		here we used the formula that $\log x=O_{\varepsilon}(x^\varepsilon)$ for any $\varepsilon>0$, $x\rightarrow\infty$. Recall that for any $\varepsilon>0$, the bound of divisor function is
		\begin{equation*}
			\tau\big(n+s({\bf p}_0)\big)\ll_\varepsilon\big(n+s({\bf p}_0)\big)^\varepsilon\ll_{\varepsilon,{\bf p}_0} n^\varepsilon,
		\end{equation*}
		which implies
		\begin{equation*}
			\sum_{\substack{ u_{1},\cdots,u_{k}}}P_{n,{\bf u},{\boldsymbol{\alpha}}}=\sum_{d\mid n+s({\bf p}_0)}\frac{\mu(d)}{d^{k-1}}+\sum_{1\leq t\leq k-1}\frac{1}{\sqrt{\alpha_t\alpha_k}}O_{k,\varepsilon,{\bf p}_0}(n^{-1/2+\varepsilon}).
		\end{equation*}
		Gathering the above results we obtain
		$$
		\mathbb{E}({V}_n)=\sum_{\substack{ d\mid n+s({\bf p}_0)}}\frac{\mu(d)}{d^{k-1}}+O_{k,\varepsilon,{\bf p}_0}(n^{-1/2+\varepsilon})\int_{\rm{A}}\sum_{1\leq t\leq k-1}\frac{1}{\sqrt{\alpha_t\alpha_k}}{\rm{Di}}_{k-1}({\boldsymbol{\alpha}}|{\bf p}_0+{\boldsymbol{\beta}})d\alpha_1\cdots d\alpha_{k-1}.
		$$
		Since $p_{0,r}\geq 2$ implies that $p_{0,r}+\beta_r>1$ for any $1\leq r\leq k$,  by \eqref{eq: bound for Dir for -1/2}, the above integral is convergent, which gives
		$$
		\mathbb{E}({V}_n)=\sum_{\substack{ d\mid n+s({\bf p}_0)}}\frac{\mu(d)}{d^{k-1}}+O_{k,\varepsilon,{\bf p}_0}(n^{-1/2+\varepsilon})
		$$
		for all $p_{0,r}\geq 2$.
		This combines \eqref{eq:E(S)=E(X)_n} and Lemma \ref{lem: sum 1/n} yields
		\begin{equation}\label{eq:E(s)=(2)}
			\mathbb{E}\big(\overline{R}_k({N})\big)=\frac{1}{N}\sum_{1\leq n\leq N}\sum_{\substack{d\mid n+s({\bf p}_0)}}\frac{\mu(d)}{d^{k-1}}+O_{k,\varepsilon,{\bf p}_0}(N^{-1/2+\varepsilon}).
		\end{equation}
		
		For the first term on the right hand of the above equation, we denote
		$$
		G_{N,k}:=	G_{N,k}({\bf p}_0)=\sum_{1\leq n\leq N}\sum_{\substack{d\mid n+s({\bf p}_0)}}\frac{\mu(d)}{d^{k-1}}=\sum_{\substack{1\leq d\leq N+s({\bf p}_0)}}\frac{\mu(d)}{d^{k-1}}\sum_{\substack{1\leq n\leq N\\n\equiv -s({\bf p}_0)(\bmod d)}}1.
		$$ 
		Observe that the number of $n\in\{1,\cdots,N\}$ satisfying $n\equiv -s({\bf p}_0)(\bmod d)$ has the asymptotic formula ${N}/{d}+O(1)$, then we arrive at
		$$
		G_{N,k}=N\sum_{\substack{1\leq d\leq N+s({\bf p}_0)}}\frac{\mu(d)}{d^{k}}+O\Big(\sum_{1\leq d\leq N+s({\bf p}_0)}\frac{1}{d}\Big).
		$$
		Appealing to Lemma \ref{lem: sum 1/n} for the $O$-term, we derive
		$$
		\begin{aligned}
			G_{N,k}&=N\sum_{\substack{1\leq d\leq N+s({\bf p}_0)}}\frac{\mu(d)}{d^{k}}+O_{\varepsilon,{\bf p}_0}(N^\varepsilon)\\
			&=N\sum_{d=1}^\infty\frac{\mu(d)}{d^k}+O\Big(N\sum_{d>N+s({\bf p}_0)}\frac{1}{d^k}\Big)+O_{\varepsilon,{\bf p}_0}(N^\varepsilon).
		\end{aligned}
		$$
		Then we get from Lemma \ref{lem: sum 1/n} and \eqref{eq:the mobius and zeta} that
		\begin{equation}\label{eq:G=}
			G_{N,k}=\frac{N}{\zeta(k)}+O_{k,\varepsilon,{\bf p}_0}(N^\varepsilon).
		\end{equation}
		This completes our proof by collecting \eqref{eq:E(s)=(2)} and \eqref{eq:G=}.
	\end{proof}

	\subsection{Estimating the variance of $\overline{R}_{k}(N)$} We recall Proposition \ref{pro:V(R)=} as follows:
	\vspace{0.3cm}
	
	\noindent{\bf Proposition \ref{pro:V(R)=}.} \textit{Let  $k\geq 2$ be an integer. Suppose
		${\bf p}_0=(p_{0,1},\cdots,p_{0,k})$ is the starting point of the perturbed walk with all $p_{0,r}\geq 2$, and the perturbation $\boldsymbol{\beta}=(\beta_1,\cdots,\beta_k)$ satisfying all $|\beta_r|<1$. Then for any $\varepsilon>0$, we have, as $N\rightarrow\infty$,
		$$
		\mathbb{V}\big(\overline{R}_k({N})\big)=O_{k,\varepsilon,{\bf p}_0}(N^{-1/2+\varepsilon}).
		$$}
	\begin{proof}
		We write the variance
		\begin{align}\label{eq:V(s(n))=E(s(n))^2-(E(s(n)))^2}
			\mathbb{V}\big(\overline{R}_k({N})\big)=\mathbb{E}\big(\overline{R}_k({N})^2\big)-\mathbb{E}\big(\overline{R}_k({N})\big)^2.
		\end{align}
		By virtue of  Proposition \ref{pro:E(R)}, the contribution of the last term to the variance of $\overline{R}_k({N})$ is
		\begin{equation}\label{eq:(E(s(n)))^2=}
			\mathbb{E}\big(\overline{R}_k({N})\big)^2=\Big(\frac{1}{\zeta(k)}\Big)^2+O_{k,\varepsilon,{\bf p}_0}(N^{-1/2+\varepsilon}).
		\end{equation}
		To compute $\mathbb{E}\big(\overline{R}_k({N})^2\big)$, we expand the square and obtain
		\begin{align}\label{eq:E((s(n))^2)=sum_E(X_iX_j)+sum_E(X_i^2)}
			\mathbb{E}\big(\overline{R}_k({N})^2\big)=\frac{2}{N^2}\sum_{1\leq n<m\leq N}\mathbb{E}({V}_n{V}_m)+\frac{1}{N^2}\sum_{1\leq n\leq N}\mathbb{E}({{V}_n}^2).
		\end{align}
		For the last sum of \eqref{eq:E((s(n))^2)=sum_E(X_iX_j)+sum_E(X_i^2)}, we have trivially
		\begin{align}\label{eq:sum EX_i^2=}
			\sum_{1\leq n\leq N}\mathbb{E}({{V}_n}^2)=\sum_{1\leq n\leq N}\mathbb{E}({V}_n)=\sum_{1\leq n\leq N}\mathbb{P}({\bf p}_n\ {\rm{is\ visible}})=O(N).
		\end{align}
		For $1\leq n<m\leq N$, note that
		$$
		\mathbb{E}({V}_n{V}_m)=\mathbb{P}({\bf p}_n\ {\rm{and}}\ {\bf p}_m\ \rm{are\ both}\ \rm{visible}).
		$$
		Recall from \eqref{eq:p_m=p_n+v} that
		\begin{align}\label{eq:E(V_nV_m=int)}
			\nonumber\mathbb{E}({V}_n{V}_m)&=\sum_{\substack{ u_{1},\cdots,u_{k}}}\sum_{\substack{v_{1},\cdots,v_{k}}}\mathbb{P}\big({\bf p}_n={\bf p}_0+(u_1,\cdots,u_k),\ {\bf p}_m={\bf p}_n+(v_1,\cdots,v_k)\big)\\
			&=\int_{\rm{A}}\sum_{\substack{ u_{1},\cdots,u_{k}}}\sum_{\substack{v_{1},\cdots,v_{k}}}P_{n,{\bf u},{\boldsymbol{\alpha}}}P_{m-n,{\bf v},{\boldsymbol{\alpha}}}{\rm{Di}}_{k-1}({\boldsymbol{\alpha}}|{\bf p}_0+{\boldsymbol{\beta}})d\alpha_1\cdots d\alpha_{k-1},
		\end{align}
		where 
		$$
		\sum_{\substack{ u_{1},\cdots,u_{k}}}\sum_{\substack{v_{1},\cdots,v_{k}}}:=\sum_{\substack{0\leq u_{1},\cdots,u_{k}\leq n\\u_{1}+\cdots+u_{k}=n\\  (u_1+p_{0,1},\cdots,u_k+p_{0,k})\ {\rm{is}}\ {\rm{visible}}}}\sum_{\substack{0\leq v_{1},\cdots,v_{k}\leq m-n\\v_{1}+\cdots+v_{k}=m-n\\ (u_1+p_{0,1}+v_1,\cdots,u_k+p_{0,k}+v_k)\ {\rm{is}}\ {\rm{visible}}}}.
		$$
		By a similar argument as that yields \eqref{eq:E(Y_n=)}, we obtain
		$$
		\sum_{\substack{v_{1},\cdots,v_{k}}}P_{m-n,{\bf v},{\boldsymbol{\alpha}}}=\sum_{\substack{d\mid m-n+s({\bf p}_0+{\bf u})}}\frac{\mu(d)}{d^{k-1}}+\sum_{1\leq t\leq k-1}\frac{1}{\sqrt{\alpha_t\alpha_k}}O_{k}\big((m-n)^{-1/2+\varepsilon}\tau(m-n+s({\bf p}_0+{\bf u}))\big).
		$$
		It then follows that
		$$
		\begin{aligned}
			\sum_{\substack{ u_{1},\cdots,u_{k}}}&\sum_{\substack{v_{1},\cdots,v_{k}}}P_{n,{\bf u},{\boldsymbol{\alpha}}}P_{m-n,{\bf v},{\boldsymbol{\alpha}}}\\
			&=\Big(\sum_{\substack{ d\mid n+s({\bf p}_0)}}\frac{\mu(d)}{d^{k-1}}+\sum_{1\leq t\leq k-1}\frac{1}{\sqrt{\alpha_t\alpha_k}}O_{k,\varepsilon,{\bf p}_0}\big(n^{-1/2+\varepsilon}\big)\Big)\cdot\\
			&\Big(\sum_{\substack{d\mid m+s({\bf p}_0)}}\frac{\mu(d)}{d^{k-1}}+\sum_{1\leq t\leq k-1}\frac{1}{\sqrt{\alpha_t\alpha_k}}O_{k,\varepsilon,{\bf p}_0}\big((m-n)^{-1/2+\varepsilon}m^\varepsilon\big)\Big)
		\end{aligned}
		$$
		for any $\varepsilon>0$. We  use the bound
		\begin{equation}\label{eq: bound of mu(d)/d^k}
			\sum_{\substack{d\mid n+s({\bf p}_0)}}\frac{\mu(d)}{d^{k-1}}\ll\sum_{\substack{d\mid n+s({\bf p}_0)}}\frac{1}{d^{k-1}}\ll\sum_{\substack{d\mid n+s({\bf p}_0)}}1\ll_{\varepsilon,{\bf p}_0} n^\varepsilon
		\end{equation}
		to expand the above formula, and for the sum related to $\alpha_t\alpha_k$, we apply the estimate $1/\sqrt{\alpha_t\alpha_k}\leq 1/\alpha_t\alpha_k$ and
		%$$
		%\begin{aligned}
		%\sum_{\substack{ u_{1},\cdots,u_{k}}}&\sum_{\substack{v_{1},\cdots,v_{k}}}P_{n,{\bf u},{\boldsymbol{\alpha}}}P_{m-n,{\bf v},{\boldsymbol{\alpha}}}\\
		%&=\sum_{\substack{1\leq d\leq n+m({\bf p}_0)\\ d\mid n+s({\bf p}_0)}}\frac{\mu(d)}{d^{k-1}}\sum_{\substack{1\leq d\leq m+m({\bf p}_0)\\ d\mid m+s({\bf p}_0)}}\frac{\mu(d)}{d^{k-1}}+\sum_{1\leq t\leq k-1}\frac{1}{\sqrt{\alpha_t\alpha_k}}O_{k,\varepsilon,{\bf p}_0}\big(n^\varepsilon m^\varepsilon(m-n)^{-1/2+\varepsilon}\big)\\
		%&+\sum_{1\leq t\leq k-1}\frac{1}{\sqrt{\alpha_t\alpha_k}}O_{k,\varepsilon,{\bf p}_0}\big(n^{-1/2+\varepsilon}m^\varepsilon\big)+\Big(\sum_{1\leq t\leq k-1}\frac{1}{\sqrt{\alpha_t\alpha_k}}\Big)^2O_{k,\varepsilon,{\bf p}_0}\big(n^{-1/2+\varepsilon} m^\varepsilon(m-n)^{-1/2+\varepsilon}\big).
		%\end{aligned}
		%$$
		$$
		\Big(\sum_{1\leq t\leq k-1}\frac{1}{\sqrt{\alpha_t\alpha_k}}\Big)^2\ll_k \sum_{1\leq t\leq k-1}\frac{1}{\alpha_t\alpha_k},
		$$
		then there holds
		$$
		\begin{aligned}
			\sum_{\substack{ u_{1},\cdots,u_{k}}}&\sum_{\substack{v_{1},\cdots,v_{k}}}P_{n,{\bf u},{\boldsymbol{\alpha}}}P_{m-n,{\bf v},{\boldsymbol{\alpha}}}\\
			&=\sum_{\substack{d\mid n+s({\bf p}_0)}}\frac{\mu(d)}{d^{k-1}}\sum_{\substack{d\mid m+s({\bf p}_0)}}\frac{\mu(d)}{d^{k-1}}
			+\sum_{1\leq t\leq k-1}\frac{1}{{\alpha_t\alpha_k}}O_{k,\varepsilon,{\bf p}_0}\big(n^\varepsilon m^\varepsilon(m-n)^{-1/2+\varepsilon}+n^{-1/2+\varepsilon}m^\varepsilon\big).
		\end{aligned}
		$$
		
		Next we insert this identity into \eqref{eq:E(V_nV_m=int)}, and then use \eqref{eq: bound for Dir for -1} for the convergence of the integral, we deduce
		$$
		\begin{aligned}
			\mathbb{E}({V}_n{V}_m)=\sum_{\substack{d\mid n+s({\bf p}_0)}}\frac{\mu(d)}{d^{k-1}}\sum_{\substack{d\mid m+s({\bf p}_0)}}\frac{\mu(d)}{d^{k-1}}+O_{k,\varepsilon,{\bf p}_0}\big(n^\varepsilon m^\varepsilon(m-n)^{-1/2+\varepsilon}+n^{-1/2+\varepsilon}m^\varepsilon\big)
		\end{aligned}
		$$
		for $p_{0,r}\geq 2,\ \forall 1\leq r\leq k$.
		To continue the computation, we  sum over $1\leq n<m\leq N$, getting
		\begin{align*}
			\sum_{1\leq n<m\leq N}\mathbb{E}({V}_n{V}_m)=\sum_{1\leq n<m\leq N}\sum_{\substack{d\mid n+s({\bf p}_0)}}\frac{\mu(d)}{d^{k-1}}\sum_{\substack{d\mid m+s({\bf p}_0)}}\frac{\mu(d)}{d^{k-1}}+O_{k,\varepsilon,{\bf p}_0}(N^{3/2+\varepsilon}),
		\end{align*}
		thanks to Lemma \ref{lem:the sum over i<j}. To simplify the first term on the right side of the equation, by \eqref{eq: bound of mu(d)/d^k}, we may add diagonal terms up to an error term  $O_{\varepsilon,{\bf p}_0}(N^{1+\varepsilon})$, then
		$$
		\sum_{1\leq n<m\leq N}\mathbb{E}({V}_n{V}_m)=\frac{1}{2}\Big(\sum_{1\leq n\leq N}\sum_{\substack{d\mid n+s({\bf p}_0)}}\frac{\mu(d)}{d^{k-1}}\Big)^2+O_{k,\varepsilon,{\bf p}_0}(N^{3/2+\varepsilon}).
		$$
		We get immediately from \eqref{eq:G=} that
		\begin{equation}\label{eq:sum_EX_iEX_j=}
			\sum_{1\leq n<m\leq N}\mathbb{E}({V}_n{V}_m)=\frac{N^2}{2}\Big(\frac{1}{\zeta(k)}\Big)^2+O_{k,\varepsilon,{\bf p}_0}(N^{3/2+\varepsilon}).
		\end{equation}
		Gathering together all the pieces \eqref{eq:E((s(n))^2)=sum_E(X_iX_j)+sum_E(X_i^2)}, \eqref{eq:sum EX_i^2=} and \eqref{eq:sum_EX_iEX_j=}, we get
		$$
		\mathbb{E}\big(\overline{R}_k({N})^2\big)=\Big(\frac{1}{\zeta(k)}\Big)^2+O_{k,\varepsilon,{\bf p}_0}(N^{-1/2+\varepsilon}),
		$$
		which together with \eqref{eq:(E(s(n)))^2=} and \eqref{eq:V(s(n))=E(s(n))^2-(E(s(n)))^2} yields our required result.
	\end{proof}

	\section{The proof of key lemma}\label{sec:appendix}
	In this section, we provide a proof of Lemma \ref{lem:sum over u} by tools from analytic number theory. In order to prove Lemma \ref{lem:sum over u}, we need the following Lemmas \ref{lem:bound for binomial paobability}-\ref{lem:estimate of beta_{t_i}}. 
	%The proof strategies are similar to that of Lemma 2.4 in \cite{LLM(2)}, but here we need to work out the explicit dependence on the parameter ${\boldsymbol{\alpha}}$ in the error terms, so certain extra works are needed in the details of the proof.% Here, we also give a complete proof.

	\begin{lemma}\label{lem:bound for binomial paobability}
		For any integer $m\geq 1$ and $0<\alpha<1$, we have
		$$
		\alpha^m\ll\frac{1}{\sqrt{\alpha(1-\alpha)}}\frac{1}{\sqrt{m}}.
		$$
	\end{lemma}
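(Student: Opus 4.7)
The plan is to deduce the inequality from the standard estimate $\ln\alpha\le \alpha-1$, which gives $\alpha^m\le e^{-m(1-\alpha)}$ for all $\alpha\in(0,1)$ and $m\ge 1$, and then reduce the two-parameter problem to a one-variable optimization. Concretely, I would rewrite the desired inequality as
\[
\alpha^m\sqrt{\alpha(1-\alpha)m}\ll 1\qquad\text{uniformly in }\alpha\in(0,1),\ m\ge 1,
\]
and use $\sqrt{\alpha}\le 1$ together with $\alpha^m\le e^{-m(1-\alpha)}$ to majorize the left-hand side by $e^{-m(1-\alpha)}\sqrt{m(1-\alpha)}$.

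Next I would substitute $x=m(1-\alpha)\ge 0$ and analyse the one-variable function $f(x)=\sqrt{x}\,e^{-x}$ on $[0,\infty)$. A routine critical-point computation shows that $f$ attains its global maximum at $x=1/2$, with value $1/\sqrt{2e}$. Combining this with the previous reduction yields
\[
\alpha^m\sqrt{\alpha(1-\alpha)m}\le \frac{1}{\sqrt{2e}},
\]
which is the claimed bound with an explicit absolute implicit constant.

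There is no serious obstacle in this argument; the only mild point to be careful about is that the inequality $\alpha^m\le e^{-m(1-\alpha)}$ holds throughout the full range $\alpha\in(0,1)$ and not just for $\alpha$ close to $1$, so a single estimate suffices and no separate treatment of the regimes $\alpha$ near $0$ or near $1$ is needed. Consequently, the entire proof collapses to one elementary inequality followed by one calculus maximization.
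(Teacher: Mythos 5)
Your argument is correct, and it is genuinely different from the paper's. The paper obtains the lemma as the special case $t=m$ of the uniform bound on binomial probabilities, $\max_{0\leq t\leq m}\binom{m}{t}\alpha^t(1-\alpha)^{m-t}\leq C/\sqrt{m\alpha(1-\alpha)}$, which it imports from the reference \cite{FF}; no further work is done. You instead give a self-contained elementary proof of exactly the piece that is needed: from $\log\alpha\leq\alpha-1$ you get $\alpha^m\leq e^{-m(1-\alpha)}$, discard the harmless factor $\sqrt{\alpha}\leq 1$, and reduce to maximizing $f(x)=\sqrt{x}\,e^{-x}$ at $x=m(1-\alpha)$, which peaks at $x=1/2$ with value $1/\sqrt{2e}$. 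This yields the explicit constant $1/\sqrt{2e}$ and avoids invoking the (strictly stronger and harder) uniform bound over all $t$, at the cost of not recovering that more general statement. For the purposes of this lemma your route is arguably cleaner; the paper's route is shorter only because it outsources the estimate to a citation.
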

	\begin{proof}
		The result is essentially from the bound for binomial probability in
		\cite{FF}
		$$
		\max_{0\leq t\leq m}\binom{m}{t}\alpha^t(1-\alpha)^{m-t}\leq C\frac{1}{\sqrt{m\alpha(1-\alpha)}},
		$$
		where $C>0$ is a certain constant.
	\end{proof}
	
	We use the following estimate in terms of cosine function.
	\begin{lemma}\label{lem:cos x<<}
		Let $d>2$ be an integer, then we have
		$$
		\sum_{1\leq h<d/2}\cos^l\big(\frac{2\pi h}{d}\big)\ll\sum_{1\leq h<d/2}\cos^l\big(\frac{\pi h}{d}\big)
		$$
		for integer $l\geq 0$, and for $l\geq1$, there holds
		$$
		\sum_{1\leq h<\frac{d}{2}}\cos^l\big(\frac{\pi h}{d}\big)\ll\frac{d}{\sqrt{l}}.
		$$
	\end{lemma}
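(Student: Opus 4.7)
The plan is to establish the two assertions separately, each by elementary trigonometric estimates.

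For the first inequality, I would exploit the symmetry $\cos(\pi-\theta)=-\cos\theta$. Split the index set $\{h\in\mathbb{Z}:1\le h<d/2\}$ at $h=d/4$ and apply the substitution $h\mapsto d/2-h$ in the upper piece: each term with $h>d/4$ is paired with one at $h'=d/2-h<d/4$ whose cosine has the same absolute value. Since $\cos(2\pi h/d)\ge 0$ for $h\le d/4$, this produces the intermediate estimate
\[
\sum_{1\le h<d/2}\bigl|\cos(2\pi h/d)\bigr|^{l}\ll \sum_{1\le h\le d/4}\cos^{l}(2\pi h/d).
\]
The substitution $k=2h$ then rewrites the right-hand side as a sum of $\cos^{l}(\pi k/d)$ over even integers $k\in[2,d/2)$, which is dominated termwise (all terms being nonnegative since $\pi k/d<\pi/2$) by the full sum $\sum_{1\le k<d/2}\cos^{l}(\pi k/d)$. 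For odd $l$, the inequality $\cos^{l}(2\pi h/d)\le\bigl|\cos(2\pi h/d)\bigr|^{l}$ makes the original bound with $\cos^{l}$ on the left only easier.

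For the second inequality, my approach is a Gaussian comparison. On $[0,\pi/2]$ the standard bound $\cos x\le 1-(2x/\pi)^{2}$ (from the infinite-product expansion of $\cos$, or from concavity arguments) combined with $1-y\le e^{-y}$ yields $\cos x\le e^{-4x^{2}/\pi^{2}}$. Hence
\[
\cos^{l}(\pi h/d)\le \exp\bigl(-4lh^{2}/d^{2}\bigr),
\]
and dominating the resulting Gaussian-type sum by the corresponding integral gives
\[
\sum_{h\ge 1}\exp\bigl(-4lh^{2}/d^{2}\bigr)\le\int_{0}^{\infty}\exp\bigl(-4lx^{2}/d^{2}\bigr)\,dx=\frac{d\sqrt{\pi}}{4\sqrt{l}},
\]
which is $O(d/\sqrt{l})$ as required.

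The only real obstacle is the bookkeeping in the first part — ensuring the boundary cases near $h=d/4$ and the parity of $d$ do not disrupt the pairing — together with verifying the cosine-to-exponential inequality used in the second part. Both issues are routine, and no deep analytic difficulty arises.
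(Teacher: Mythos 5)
Your proof is correct. The first assertion is handled essentially as in the paper: both arguments rest on the reflection $\cos(\pi-\theta)=-\cos\theta$ together with the doubling substitution, the only difference being the order of operations (the paper first sets $h'=2h$ and then reflects $t=d-h'$, while you reflect first and then double); your intermediate claim that the reflected sum lands back in $\{1\le h\le d/4\}$ needs the harmless adjustment that for odd $d$ the reflected index is $d-2h$ rather than $d/2-h$, but since $d-2h$ is always an integer in $[1,d/2)$ you land directly in the target sum $\sum_{1\le k<d/2}\cos^l(\pi k/d)$, which is all you use. For the second assertion you genuinely diverge from the paper: the paper compares the sum with $\int_0^{d/2}\cos^l(\pi t/d)\,dt$, evaluates $\int_0^{\pi/2}\cos^l x\,dx$ exactly as a ratio of Gamma values, and then invokes the Stirling-type bound $\Gamma(s-1/2)/\Gamma(s)\ll s^{-1/2}$, whereas you use the pointwise inequality $\cos x\le e^{-4x^2/\pi^2}$ on $[0,\pi/2]$ (valid by the infinite product for cosine) and dominate the resulting sum by a Gaussian integral. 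Your route is more elementary — it avoids the Wallis/Beta evaluation and the Gamma-ratio asymptotic \eqref{eq:the bound of gamma (s+u)} entirely — while the paper's route gives the sharp constant $\sqrt{\pi/2}\cdot d/(\pi\sqrt{l})$ as a byproduct; both yield the stated $O(d/\sqrt{l})$ bound.
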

	\begin{proof}
		For the first estimate, we only consider $l\geq 1$. Using change of variable $h^{\prime}=2h$, we obtain
		$$
		\sum_{1\leq h<d/2}\cos^l\big(\frac{2\pi h}{d}\big)=\sum_{\substack{1\leq h^{\prime}<d\\h^{\prime}\ {\rm{even}}}}\cos^l\big(\frac{\pi h^{\prime}}{d}\big).
		$$
		Spliting the  sum into two parts gives us
		$$
		\sum_{1\leq h<d/2}\cos^l\big(\frac{2\pi h}{d}\big)=\sum_{\substack{1\leq h^{\prime}<d/2\\h^{\prime}\ {\rm{even}}}}\cos^l\big(\frac{\pi h^{\prime}}{d}\big)+\sum_{\substack{d/2< h^{\prime}<d\\h^{\prime}\ {\rm{even}}}}\cos^l\big(\frac{\pi h^{\prime}}{d}\big).
		$$
		For the last summation, we write $t=d-h^{\prime}$ and obtain
		$$
		\sum_{1\leq h<d/2}\cos^l\big(\frac{2\pi h}{d}\big)=\sum_{\substack{1\leq h^{\prime}<d/2\\h^{\prime}\ {\rm{even}}}}\cos^l\big(\frac{\pi h^{\prime}}{d}\big)+(-1)^l\sum_{\substack{1\leq t<d/2\\t\ {\rm{even}}}}\cos^l\big(\frac{\pi t}{d}\big)
		$$
		for even $d$, and
		$$
		\sum_{1\leq h<d/2}\cos^l\big(\frac{2\pi h}{d}\big)=\sum_{\substack{1\leq h^{\prime}<d/2\\h^{\prime}\ {\rm{even}}}}\cos^l\big(\frac{\pi h^{\prime}}{d}\big)+(-1)^l\sum_{\substack{1\leq t<d/2\\t\ {\rm{odd}}}}\cos^l\big(\frac{\pi t}{d}\big)
		$$
		for odd $d$, which is our desired result.
		
		For the second assertion, since $\cos^l(\pi t/d)$ is decreasing for $t\in[0,d/2]$,  we have
		$$
		\sum_{1\leq h<\frac{d}{2}}\cos^l\big(\frac{\pi h}{d}\big)\ll\int_{0}^{\frac{d}{2}}\cos^l\big(\frac{\pi t}{d}\big)dt.
		$$
		Using change of variable $x=\pi t/d$ and the formula
		$$
		\int_{0}^{\frac{\pi}{2}}\cos^l(x)dx=\frac{\sqrt{\pi}\Gamma(\frac{l+1}{2})}{2\Gamma(\frac{l+2}{2})},
		$$
		where $\Gamma(s)$ is the Euler gamma function, we obtain
		\begin{align}\label{eq: sum_hcos^l<<}
			\sum_{1\leq h<\frac{d}{2}}\cos^l\big(\frac{\pi h}{d}\big)\ll d\int_{0}^{\frac{\pi}{2}}\cos^l(x)dx\ll\frac{d\Gamma(\frac{l+1}{2})}{\Gamma(\frac{l+2}{2})}.
		\end{align}
		Applying the estimate \eqref{eq:the bound of gamma (s+u)} with $s=(l+2)/2$ and $u=-1/2$, we arrive at
		$$
		\frac{\Gamma(\frac{l+1}{2})}{\Gamma(\frac{l+2}{2})}=\frac{\Gamma(\frac{l+2}{2}-\frac{1}{2})}{\Gamma(\frac{l+2}{2})}\ll\frac{1}{\sqrt{l}}.
		$$
		This together with \eqref{eq: sum_hcos^l<<} gives our desired result.
	\end{proof}
	
	\begin{lemma}\label{lem:the sum in two dimensions}
		Suppose $n\in\mathbb{Z}_{>0}$ and $0<\delta\leq \frac{1}{2}$. Then for any integer $ d>1$, we have
		$$
		\frac{1}{d}\sum_{1\leq h\leq d-1}\big(1-\delta+\delta\cos(\frac{2\pi h}{d})\big)^{\frac{n}{2}}=\frac{1}{\sqrt{\delta(1-\delta)}}O\Big(\frac{\log n}{\sqrt{n}}\Big)
		$$
		as $n\rightarrow\infty$.
	\end{lemma}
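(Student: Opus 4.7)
The plan is to convert the summand into a Gaussian form using the identity $\cos(2\pi h/d)=1-2\sin^2(\pi h/d)$ and the elementary inequality $(1-x)^{n/2}\le e^{-nx/2}$, and then estimate the resulting sum by an integral.

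First, I rewrite $1-\delta+\delta\cos(2\pi h/d)=1-2\delta\sin^2(\pi h/d)$. Since $0<\delta\le 1/2$, this quantity lies in $[1-2\delta,1]\subseteq[0,1]$, so its $(n/2)$-th power is well-defined and non-negative. Using the symmetry $\sin(\pi(d-h)/d)=\sin(\pi h/d)$, the sum collapses onto $1\le h\le\lfloor d/2\rfloor$ up to a factor of $2$ (and a bounded middle term when $d$ is even).

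Next, the inequality $(1-x)^{n/2}\le e^{-nx/2}$ for $x\in[0,1]$, combined with Jordan's inequality $\sin(\pi h/d)\ge 2h/d$ on $1\le h\le d/2$, yields the pointwise Gaussian bound $(1-2\delta\sin^2(\pi h/d))^{n/2}\le e^{-4n\delta h^2/d^2}$. A standard integral comparison then gives
\[ \sum_{h\ge 1}e^{-4n\delta h^2/d^2}\le \int_0^\infty e^{-4n\delta t^2/d^2}\,dt=\frac{\sqrt{\pi}\,d}{4\sqrt{n\delta}}, \]
so $\frac{1}{d}\sum_{h=1}^{d-1}\bigl(1-\delta+\delta\cos(2\pi h/d)\bigr)^{n/2}\ll 1/\sqrt{n\delta}$.

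Finally, since $\delta\le 1/2$ forces $1-\delta\ge 1/2$, one has $\frac{1}{\sqrt{n\delta}}\le \frac{\sqrt{2}}{\sqrt{n\delta(1-\delta)}}\le \frac{1}{\sqrt{\delta(1-\delta)}}\cdot\frac{\log n}{\sqrt{n}}$ for $n\ge 3$, which is the stated bound (in fact with an extra $\log n$ of slack). I do not expect any substantive obstacle; the main technical item is the pointwise Gaussian replacement via Jordan's inequality, and the only boundary case to double-check is $\delta=1/2$, $h=d/2$, where the summand vanishes and everything remains consistent. One could alternatively expand $(1-\delta+\delta\cos(2\pi h/d))^{\lfloor n/2\rfloor}$ via the binomial theorem and apply Lemma \ref{lem:cos x<<} termwise together with a Chernoff-type tail bound on a $\mathrm{Bin}(\lfloor n/2\rfloor,\delta)$ random variable, producing the same order of magnitude; however the Gaussian approach above is more direct and introduces the factor $\sqrt{\delta(1-\delta)}$ in the bound for free via the case $\delta\le 1/2$.
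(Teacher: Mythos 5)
Your proof is correct, and it takes a genuinely different route from the paper's. You bound the summand pointwise by a Gaussian: writing $1-\delta+\delta\cos(2\pi h/d)=1-2\delta\sin^2(\pi h/d)$, applying $(1-x)^{n/2}\le e^{-nx/2}$ together with Jordan's inequality $\sin(\pi h/d)\ge 2h/d$ on $1\le h\le d/2$, and comparing the resulting sum with $\int_0^\infty e^{-4n\delta t^2/d^2}\,dt$. This is self-contained (it uses neither Lemma \ref{lem:bound for binomial paobability} nor Lemma \ref{lem:cos x<<}), it handles odd $n$ with no extra step since the base lies in $[0,1]$, and it in fact yields the sharper estimate $O\big(1/\sqrt{n\delta}\big)\le \frac{1}{\sqrt{\delta(1-\delta)}}O\big(1/\sqrt{n}\big)$, showing the $\log n$ in the statement is not needed. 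The paper instead takes $n=2m$ even, expands $\big(1-\delta+\delta\cos(2\pi h/d)\big)^m$ by the binomial theorem, invokes Lemma \ref{lem:cos x<<} to bound $\sum_{h}\cos^l(\pi h/d)\ll d/\sqrt{l}$, and splits the $l$-sum at $l=m/\log^2 m$, using Lemma \ref{lem:bound for binomial paobability} for the small-$l$ range; the $\log n$ is an artifact of that splitting, and odd $n$ is reduced to $n-1$. The only details worth writing out in your version are the symmetry reduction to $1\le h\le\lfloor d/2\rfloor$ (the middle term $h=d/2$ for even $d$ is covered by the same bound, contributing $e^{-n\delta}\ll 1/\sqrt{n\delta}$) and the monotone comparison $\sum_{h\ge1}e^{-ah^2}\le\int_0^\infty e^{-at^2}\,dt$; both are routine.
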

	
	\begin{proof}
		To simplify our notations, we denote
		$$
		H_{n,\delta}=H_{n,\delta}(d):=\frac{1}{d}\sum_{1\leq h\leq d-1}\big(1-\delta+\delta\cos(\frac{2\pi h}{d})\big)^{\frac{n}{2}}.
		$$
		For the case $d=2$, by virtue of Lemma \ref{lem:bound for binomial paobability}, we get
		$$
		H_{n,\delta}\ll (1-2\delta)^{\frac{n}{2}}\ll(1-\delta)^{\frac{n}{2}}\ll\frac{1}{\sqrt{\delta(1-\delta)}}\frac{1}{\sqrt{n}}.
		$$
		So we only need to handle the remaining case $d>2$.
		
		If $n=2m$ is even, we resort to the binomial theorem and obtain
		$$
			H_{n,\delta}=\frac{1}{d}\sum_{0\leq l\leq m}\binom{m}{l}(1-\delta)^{m-l}\delta^l \sum_{1\leq h\leq d-1}\cos^l\big(\frac{2\pi h}{d}\big).
		$$
		By an elementary argument, we derive
		$$
		\sum_{1\leq h\leq d-1}\cos^l\big(\frac{2\pi h}{d}\big)=2\sum_{1\leq h<d/2}\cos^l\big(\frac{2\pi h}{d}\big)+\epsilon_{d}(-1)^l,
		$$	
		where
		$$
		\epsilon_{d}=\begin{cases}
			1, \quad{\rm if}\ d\ {\rm\ is\ even},\\
			0, \quad{\rm if}\  d\ {\rm\ is\ odd}.
		\end{cases}
		$$
		Applying Lemma \ref{lem:cos x<<} and the binomial theorem again, we have
		$$
			H_{n,\delta}\ll\frac{1}{d}\sum_{0\leq l\leq m}\binom{m}{l}(1-\delta)^{m-l}\delta^l I(l,d)+(1-2\delta)^m,
		$$
		where	
		$$
		I(l,d):=\sum_{1\leq h<d/2}\cos^l\big(\frac{\pi h}{d}\big).
		$$
		Then Lemma \ref{lem:bound for binomial paobability} tells us that the bound of the last term is
		$$
		(1-2\delta)^m\ll (1-\delta)^m\ll \frac{1}{\sqrt{\delta(1-\delta)}}\frac{1}{\sqrt{m}}.
		$$
	We divide the sum over $l$ into two parts according to $l\leq m/\log^2m$ or not, then
		\begin{align}\label{eq:H_n<<H_n,1+H_n,2}
				H_{n,\delta}\ll 	H_{n,\delta}^{\prime}+	H_{n,\delta}^{\prime\prime}+\frac{1}{\sqrt{\delta(1-\delta)}}\frac{1}{\sqrt{m}},
		\end{align}
		where
		$$
		H_{n,\delta}^{\prime}:=\frac{1}{d}\sum_{0\leq l\leq m/\log^2m}\binom{m}{l}(1-\delta)^{m-l}\delta^lI(l,d)
		$$
		and
		$$
		H_{n,\delta}^{\prime\prime}:=\frac{1}{d}\sum_{m/\log^2m< l\leq m}\binom{m}{l}(1-\delta)^{m-l}\delta^lI(l,d).
		$$
		
		For $H_{n,\delta}^{\prime}$, we use the trivial bound $|\cos(x)|\leq1$ and the assumption that $\delta\leq 1/2$, then 
		\begin{align*}
				H_{n,\delta}^{\prime}\ll\sum_{0\leq l\leq m/\log^2m}\binom{m}{l}(1-\delta)^{m-l}\delta^l\ll(1-\delta)^m\sum_{0\leq l\leq m/\log^2m}\binom{m}{l},
		\end{align*}
		Notice that the binomial coefficient
		$$
		\binom{m}{l}=\frac{m!}{l!\ (m-l)!}\leq \frac{m^l}{l!},
		$$
		which yields
		$$
			H_{n,\delta}^{\prime}\ll(1-\delta)^mm^{m/\log^2m}\sum_{0\leq l\leq m/\log^2m}\frac{1}{l!}\ll(1-\delta)^mm^{m/\log^2m}.
		$$
		We infer from Lemma \ref{lem:bound for binomial paobability} that, for $m$ being sufficiently large,
		\begin{align*}
			(1-\delta)^mm^{m/\log^2m}=\big((1-\delta)e^{1/\log m}\big)^m\leq(1-\frac{1}{2}\delta)^m\ll\frac{1}{\sqrt{\frac{1}{2}\delta(1-\frac{1}{2}\delta)}}\frac{1}{\sqrt{m}}.
		\end{align*}
		 Hence we have
		\begin{align}\label{eq:H_n,1<<}
				H_{n,\delta}^{\prime}\ll\frac{1}{\sqrt{\delta(1-\delta)}}\frac{1}{\sqrt{n}}.
		\end{align}
		
		For $H_{n,\delta}^{\prime\prime}$, we use the second bound in Lemma \ref{lem:cos x<<} that
		$$		H_{n,\delta}^{\prime\prime}\ll\sum_{m/\log^2m< l\leq m}\binom{m}{l}\frac{(1-\delta)^{m-l}\delta^l}{\sqrt{l}}\ll\frac{\log m}{\sqrt m}\sum_{m/\log^2m< l\leq m}\binom{m}{l}(1-\delta)^{m-l}\delta^l\ll \frac{\log m}{\sqrt m},
		$$
		this immedicately gives
		\begin{align}\label{eq:H_n,2<<}
				H_{n,\delta}^{\prime\prime}\ll\frac{\log n}{\sqrt{n}}.
		\end{align}
		Inserting \eqref{eq:H_n,1<<} and \eqref{eq:H_n,2<<} into \eqref{eq:H_n<<H_n,1+H_n,2}, we get
		$$
		H_{n,\delta}\ll \frac{1}{\sqrt{\delta(1-\delta)}}\frac{1}{\sqrt{n}}+\frac{\log n}{\sqrt{n}}.
		$$
		Further by the estimate that $\sqrt{\delta(1-\delta)}\leq (\delta+1-\delta)/2=1/2$, we have, for even $n$,
		\begin{align}\label{eq:H_n<< for n is even}
				H_{n,\delta}\ll\frac{1}{\sqrt{\delta(1-\delta)}}\frac{\log n}{\sqrt{n}}.
		\end{align}
		
		It remains to consider the case when $n$ is odd. We employ the bound $
		|1-\delta+\delta\cos(2\pi h/d)|\leq 1$ and obtain
		$$
		H_{n,\delta}\ll\frac{1}{d}\sum_{1\leq h\leq d-1}\big(1-\delta+\delta\cos(\frac{2\pi h}{d})\big)^{\frac{n-1}{2}}=H_{n-1,\delta}.
		$$
		Since $n-1$ is even, we have
		$$
			H_{n,\delta}\ll 	H_{n-1,\delta} \ll\frac{1}{\sqrt{\delta(1-\delta)}}\frac{\log (n-1)}{\sqrt{n-1}}\ll\frac{1}{\sqrt{\delta(1-\delta)}}\frac{\log n}{\sqrt{n}}
		$$
		for odd $n$. This combines with \eqref{eq:H_n<< for n is even} gives our results.
	\end{proof}
	
	\begin{lemma}\label{lem:estimate of beta_{t_i}}
		Let $n\geq 1,\ l\geq 2$ be integers. Suppose integer $1\leq i\leq l-1$ is fixed and integers $1\leq t_1<\cdots<t_i\leq l-1$, and $0<\lambda_{t_1},\cdots,\lambda_{t_i},\eta_i<1$ with $\eta_i=1-(\lambda_{t_1}+\cdots+\lambda_{t_i})$. Then for any integer $d>1$, we have
		$$
		\frac{1}{d^{l-1}}\sum_{1\leq h_{t_1},\cdots,h_{t_i}\leq d-1}\Big|\lambda_{t_1}e\big(\frac{h_{t_1}}{d}\big)+\cdots+\lambda_{t_{i}}e\big(\frac{h_{t_i}}{d}\big)+\eta_i\Big|^n=\frac{1}{\sqrt{2\eta_i\lambda_{t_i}(1-2\eta_i\lambda_{t_i})}}O\Big(\frac{\log n}{\sqrt{n}}\Big)
		$$
		as $n\rightarrow\infty$, where $e(x):=e^{2\pi ix}$ for any real number $x$.
	\end{lemma}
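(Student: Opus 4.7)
The plan is to reduce this multivariate modulus sum to a single–variable cosine sum of the type already handled by Lemma~\ref{lem:the sum in two dimensions}. Writing $A:=\sum_{j=1}^{i}\lambda_{t_j}e(h_{t_j}/d)$ and expanding $|A+\eta_i|^2=|A|^2+2\eta_i\,\mathrm{Re}(A)+\eta_i^2$, then subtracting the identity $(\lambda_{t_1}+\cdots+\lambda_{t_i}+\eta_i)^2=1$, I would arrive at the clean representation
\begin{equation*}
|A+\eta_i|^2 \;=\; 1 \;-\; 2\eta_i\sum_{j=1}^{i}\lambda_{t_j}\bigl(1-\cos(2\pi h_{t_j}/d)\bigr) \;-\; 2\!\!\sum_{1\le j<k\le i}\!\!\lambda_{t_j}\lambda_{t_k}\bigl(1-\cos(2\pi(h_{t_j}-h_{t_k})/d)\bigr).
\end{equation*}
Both subtracted sums are non-negative, so they may be freely discarded; the key move is to retain exactly the single term that isolates $h_{t_i}$.

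Keeping only the $j=i$ summand of the first sum and setting $\delta:=2\eta_i\lambda_{t_i}$, this produces the pointwise upper bound
\begin{equation*}
|A+\eta_i|^2 \;\le\; 1-\delta+\delta\cos(2\pi h_{t_i}/d).
\end{equation*}
Because $\eta_i+\lambda_{t_i}\le 1$, the AM--GM inequality gives $\eta_i\lambda_{t_i}\le 1/4$, hence $\delta\le 1/2$; in particular the right-hand side is $\ge 1-2\delta\ge 0$, which legitimises raising to the $n/2$ power. This is the only step that requires any care.

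Since the resulting bound depends only on $h_{t_i}$, the sums over the remaining variables $h_{t_1},\ldots,h_{t_{i-1}}\in\{1,\ldots,d-1\}$ contribute a crude factor $\le d^{i-1}$, while the sum over $h_{t_i}$ falls directly under Lemma~\ref{lem:the sum in two dimensions}, yielding
\begin{equation*}
\sum_{h_{t_i}=1}^{d-1}\bigl(1-\delta+\delta\cos(2\pi h_{t_i}/d)\bigr)^{n/2} \;=\; \frac{d}{\sqrt{\delta(1-\delta)}}\,O\!\left(\frac{\log n}{\sqrt n}\right).
\end{equation*}
Multiplying these estimates together and dividing by $d^{l-1}$, the $d$-powers collapse to $d^{\,i-l+1}\le 1$ because $i\le l-1$, and after substituting $\delta=2\eta_i\lambda_{t_i}$ the asserted bound $(2\eta_i\lambda_{t_i}(1-2\eta_i\lambda_{t_i}))^{-1/2}\,O(\log n/\sqrt n)$ drops out.

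There is no real obstacle beyond Lemma~\ref{lem:the sum in two dimensions} itself, which absorbs all the genuine oscillatory cancellation. The novelty of this lemma over that one is purely organisational: one must recognise that among the many non-negative terms in the expansion of $1-|A+\eta_i|^2$, precisely the diagonal term $2\eta_i\lambda_{t_i}(1-\cos(2\pi h_{t_i}/d))$ provides both a variable usable for cosine-averaging and the correct denominator $\sqrt{2\eta_i\lambda_{t_i}(1-2\eta_i\lambda_{t_i})}$ in the final estimate.
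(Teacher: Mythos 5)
Your proposal is correct and follows essentially the same route as the paper: both reduce to the pointwise bound $|A+\eta_i|^2\le 1-\delta+\delta\cos(2\pi h_{t_i}/d)$ with $\delta=2\eta_i\lambda_{t_i}\le 1/2$, estimate the remaining $i-1$ summation variables trivially by $d^{i-1}$, and invoke Lemma~\ref{lem:the sum in two dimensions}. The only (cosmetic) difference is that you obtain the pointwise bound by subtracting the identity $(\lambda_{t_1}+\cdots+\lambda_{t_i}+\eta_i)^2=1$ directly, whereas the paper passes twice through the double-angle formula $\cos(2\theta)=2\cos^2\theta-1$; your derivation is the cleaner of the two.
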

	
	\begin{proof}
		We write
		$$
		J_{n,{({\lambda}_{t_a}})_{a\leq i}}=J_{n,{({\lambda}_{t_a}})_{a\leq i}}(d,l):=\frac{1}{d^{l-1}}\sum_{1\leq h_{t_1},\cdots,h_{t_i}\leq d-1}\Big|\lambda_{t_1}e\big(\frac{h_{t_1}}{d}\big)+\cdots+\lambda_{t_{i}}e\big(\frac{h_{t_i}}{d}\big)+\eta_i\Big|^n.
		$$
		By the definition of $e(x)$ and the formula $\cos(2\theta)=2\cos^2(\theta)-1$, we expand the $n$-th power inside the above sum, then
		$$
		J_{n,{({\lambda}_{t_a}})_{a\leq i}}=\frac{1}{d^{l-1}}\sum_{1\leq h_{t_1},\cdots,h_{t_i}\leq d-1}\Big(\sum_{1\leq a\leq i}\lambda^2_{t_a}+\eta^2_i-2\sum_{1\leq a<b\leq i}\lambda_{t_a}\lambda_{t_b}-2\ \eta_i\sum_{1\leq b\leq i}\lambda_{t_b}+J_{n,{({\lambda}_{t_a}})_{a\leq i}}^{\prime}\Big)^{\frac{n}{2}},
		$$
		where
		$$
		J_{n,{({\lambda}_{t_a}})_{a\leq i}}^{\prime}:=4\sum_{1\leq a<b\leq i}\lambda_{t_a}\lambda_{t_b}\cos^2\big(\frac{\pi(h_{t_a}-h_{t_b})}{d}\big)+4\eta_i\sum_{1\leq b\leq i}\lambda_{t_b}\cos^2\big(\frac{\pi h_{t_b}}{d}\big).
		$$
		Trivially, we have
		$$
		J_{n,{({\lambda}_{t_a}})_{a\leq i}}^{\prime}\leq 4\sum_{1\leq a<b\leq i}\lambda_{t_a}\lambda_{t_b}+4\eta_i\sum_{1\leq b\leq i-1}\lambda_{t_b}+4\eta_i\lambda_{t_i}\cos^2\big(\frac{\pi h_{t_i}}{d}\big).
		$$
		Then we apply $\cos(2\theta)=2\cos^2(\theta)-1$ again, there holds
		$$
	J_{n,{({\lambda}_{t_a}})_{a\leq i}}\ll\frac{1}{d^{l-1}}\sum_{1\leq h_{t_1},\cdots,h_{t_i}\leq d-1}\Big(\sum_{1\leq a\leq i}\lambda^2_{t_a}+\eta^2_i+2\sum_{1\leq a<b\leq i}\lambda_{t_a}\lambda_{t_b}+2\eta_i\sum_{1\leq b\leq i-1}\lambda_{t_b}+2\eta_i\lambda_{t_i}\cos\big(\frac{2\pi h_{t_i}}{d}\big)\Big)^{\frac{n}{2}}.
		$$
		If we set $\delta=2\eta_i\lambda_{t_i}$, then $0<\delta\leq 1/2$ and
		$$
		J_{n,{({\lambda}_{t_a}})_{a\leq i}}\ll\frac{1}{d}\sum_{1\leq h_{t_i}\leq d-1}
		\Big(1-\delta+\delta\cos\big(\frac{2\pi h_{t_i}}{d}\big)\Big)^{\frac{n}{2}}.
		$$
		It is then straightforward to complete the proof by Lemma \ref{lem:the sum in two dimensions}.
	\end{proof}
	
	Now we begin to prove Lemma \ref{lem:sum over u} by the above useful lemmas. 
	\vspace{0.3cm}
	
	\noindent\textbf{Lemma \ref{lem:sum over u}.}  \textit{Let $n\geq 1,\ k\geq 2$ be integers and $c_1,\cdots,c_{k-1}\in\mathbb{Z}$. Assume ${\boldsymbol{\alpha}}:=(\alpha_{1},\cdots,\alpha_{k-1})$ satisfying $0<\alpha_{1},\cdots,\alpha_{k-1}<1$ and $\alpha_{1}+\cdots+\alpha_{k-1}<1$. Then for any integer $d\geq 1$, we have
		$$
		\sum_{\substack{0\leq u_1,\cdots,u_k\leq n\\u_1+\cdots+u_k= n\\ u_r\equiv c_r(\bmod d),\\ \forall 1\leq r\leq k-1}}\binom{n}{u_1,\cdots,u_k}\alpha_1^{u_1}\cdots\alpha_{k-1}^{u_{k-1}}\alpha_{k}^{u_k}=\frac{1}{d^{k-1}}+\sum_{1\leq t\leq k-1}\frac{1}{\sqrt{\alpha_t\alpha_{k}}}O_{k}\Big(\frac{\log n}{\sqrt{n}}\Big)
		$$
		as $n\rightarrow\infty$, where $\alpha_{k}:=1-\sum_{r=1}^{k-1}\alpha_r$.}
	\begin{proof}
		For the sake of simplicity, denote
		$$
		L_{n,d,k,\boldsymbol{\alpha}}=L_{n,d,k,\boldsymbol{\alpha}}(c_r)_{r\leq k-1}:=\sum_{\substack{0\leq u_1,\cdots,u_k\leq n\\u_1+\cdots+u_k= n\\ u_r\equiv c_r(\bmod d),\\ \forall 1\leq r\leq k-1}}P_{n,{\bf u},{\boldsymbol{\alpha}}},
		$$	
		where $P_{n,{\bf u},{\boldsymbol{\alpha}}}$ is defined by \eqref{eq:def of P_{n,u}}.
		Appealing to the orthogonality of additive characters
		$$
		\frac{1}{d}\sum_{0\leq h\leq d-1}e\Big(\frac{hn}{d}\Big)=\left\{
		\begin{aligned}
			&1,\ \ \ \ {\rm{if}}\ d\mid n,\\
			&0,\ \ \ \ \rm{otherwise},
		\end{aligned}
		\right.
		$$	
		where $n\in \mathbb{Z}$ and $d\in\mathbb{Z}_{>0}$, we have
		\begin{align*}
			L_{n,d,k,\boldsymbol{\alpha}}=\frac{1}{d^{k-1}}\sum_{\substack{0\leq u_1,\cdots,u_k\leq n\\u_1+\cdots+u_k= n}}P_{n,{\bf u},{\boldsymbol{\alpha}}}\prod_{r=1}^{k-1}\sum_{0\leq h_r\leq d-1}e\Big(\frac{h_r(u_r-c_r)}{d}\Big).
		\end{align*}
		We change the order of the summations and apply the binomial theorem, then
		$$
		L_{n,d,k,\boldsymbol{\alpha}}=\frac{1}{d^{k-1}}\sum_{0\leq h_1,\cdots, h_{k-1}\leq d-1}\prod_{r=1}^{k-1}e(-\frac{h_rc_r}{d})\Big(\alpha_{1}e\big(\frac{h_1}{d}\big)+\cdots+\alpha_{k-1}e\big(\frac{h_{k-1}}{d}\big)+\alpha_{k}\Big)^{n},
		$$
		where $\alpha_{k}:=1-\sum_{r=1}^{k-1}\alpha_r$.
		Taking out the term $h_1=h_2=\cdots =h_{k-1}=0$ and using the fact $|e(x)|=1$, we obtain
		\begin{align}\label{eq:L=}
		L_{n,d,k,\boldsymbol{\alpha}}=\frac{1}{d^{k-1}}+O\big(L_{n,d,k,\boldsymbol{\alpha}}^{\prime}\big),
		\end{align}
		where
		$$
		L_{n,d,k,\boldsymbol{\alpha}}^{\prime}:=\frac{1}{d^{k-1}}\sum_{1\leq i\leq k-1}\sum_{\substack{1\leq m_1,\cdots,m_i\leq k-1\\m_1<\cdots<m_i}}\sum_{1\leq h_{m_1},\cdots,h_{m_i}\leq d-1}\Big|\alpha_{m_1}e\big(\frac{h_{m_1}}{d}\big)+\cdots+\alpha_{m_{i}}e\big(\frac{h_{m_i}}{d}\big)+\eta_{i}\Big|^{n}
		$$
		with $\eta_{i}:=1-(\alpha_{m_1}+\cdots+\alpha_{m_{i}})$.
		Then Lemma \ref{lem:estimate of beta_{t_i}} indicates that
		$$
		L_{n,d,k,\boldsymbol{\alpha}}^{\prime}\ll\frac{\log n}{\sqrt{n}}\sum_{1\leq i\leq k-1}\sum_{\substack{1\leq m_1,\cdots,m_i\leq k-1\\m_1<\cdots<m_i}}\frac{1}{\sqrt{2\eta_i\alpha_{m_i}(1-2\eta_i\alpha_{m_i})}},
		$$
		where $0<2\eta_i\alpha_{m_i}\leq 1/2$. It follows $\sqrt{1-2\eta_i\alpha_{m_i}}\geq 1/\sqrt{2}$ and 
		$$
		2\eta_i\alpha_{m_i}=2\alpha_{m_i}\big(1-(\alpha_{m_1}+\cdots+\alpha_{m_{i}})\big)\geq 2\big(1-(\alpha_1+\cdots+\alpha_{k-1})\big)=2\alpha_k\alpha_{m_i}
		$$
		that
		\begin{align}\label{eq:L'<<}
			\nonumber L_{n,d,k,\boldsymbol{\alpha}}^{\prime}&\ll\frac{\log n}{\sqrt{n}}\sum_{1\leq i\leq k-1}\sum_{1\leq m_i\leq k-1}\frac{1}{\sqrt{\alpha_{m_i}\alpha_k}}\sum_{1\leq m_1,\cdots, m_{i-1}\leq k-1}1\\
			&\ll_k \frac{\log n}{\sqrt{n}}\sum_{1\leq t\leq k-1}\frac{1}{\sqrt{\alpha_{t}\alpha_k}}.
		\end{align}
		Now our required result follows by \eqref{eq:L=} and \eqref{eq:L'<<}.
	\end{proof}
	
	\noindent\textbf{Acknowledgements.}
	%The second listed author is partially supported by National Natural Science Foundation of China (NSFC, Grant No. 12071238).
	The authors are partially supported by the National Natural Science Foundation of China (No. 12201346) and Shandong Provincial Foundation (No. 2022HWYQ-046 \& No. ZR2022QA001).  The second listed author thanks the support of the Chinese Scholarship Council and the hospitality of the Institute of Financial Mathematics and Applied Number Theory at the Johannes Kepler University  Linz.   % the National Key Research and Development Program of China (No. 2021YFA1000700).

\end{document}